\newcommand{\vp}{\varphi}
\newcommand{\ddbar}{\sqrt{-1} \partial \overline{\partial}}
\newcommand{\ol}{\overline}
\newcommand{\dbar}{\overline{\partial}}
\newcommand{\de}{\partial}
\newcommand{\vol}{\mathrm{Vol}}
\newcommand{\ov}[1]{\overline{#1}}
\newcommand{\ti}[1]{\tilde{#1}}
\renewcommand{\leq}{\leqslant}
\renewcommand{\geq}{\geqslant}
\renewcommand{\le}{\leqslant}
\newcommand{\be}{\begin{equation}}
\newcommand{\ee}{\end{equation}}
\begin{document}
\newcounter{remark}
\newcounter{theor}
\setcounter{remark}{0}
\setcounter{theor}{1}
\newtheorem{claim}{Claim}
\newtheorem{theorem}{Theorem}[section]
\newtheorem{lemma}[theorem]{Lemma}
\newtheorem{corollary}[theorem]{Corollary}
\newtheorem{proposition}[theorem]{Proposition}
\newtheorem{question}{Question}[section]
\newtheorem{defn}{Definition}[theor]
\newtheorem{remark}[theorem]{Remark}

\numberwithin{equation}{section}

\title{The adiabatic limit of Fu-Yau equations}
\author[L. Huang]{Liding Huang}
\address{School of Mathematical Sciences, University of Science and Technology of China, Hefei 230026, P. R. China}
\email{huangld@mail.ustc.edu.cn}

\begin{abstract}
	In this paper, we consider the  adiabatic limit of Fu-Yau equations on a product of two Calabi-Yau manifolds. We prove that the  adiabatic limit of Fu-Yau equations are quasilinear equations.  
\end{abstract}
\maketitle
\section{introduction}
To study supergravity in theoretical physics,  Strominger \cite{SA} proposed a new system of equations, now referred to as the Strominger system, on $3$-dimensional complex manifolds. It can be viewed as a  generalization of the Calabi
equation for Ricci flat K\"{a}hler metric to non-K\"{a}hler spaces \cite{YT05}. 
 In \cite{FuY08}, Fu-Yau gave non-perturbative, non-K\"{a}hler solutions of the Strominger system on a toric fibration over a $K3$ surface constructed by Goldstein-Prokushki \cite{GP04} and proposed a modified Strominger system  in higher dimensions. They \cite{FuY07, FuY08} proved that it suffices to solve the Fu-Yau equation for the modified Strominger system (Strominger system) in higher dimensions (dimension 3) on the  toric fibration over a Calabi-Yau manifold and derived the existence of the Fu-Yau equation in dimension $2$. The Fu-Yau equation has been studied extensively, for example  \cite{PPZ17}, \cite{PPZ18}, \cite{CHZ19}, \cite{CHZ192}, \cite{PPZ18b} and the references therein. 
  
  In this paper, we consider the adiabatic limit  of the Fu-Yau equation. The adiabatic limit of partial differential equations (system) have been studied  extensively. For example, Donaldson-Tomas \cite{DT} considered the adiabatic limit of holomorphic instanton equations.  The adiabatic limit of anti-self-dual connections was studied by J. Chen \cite{CJ1, CJ2}  on unitary bundles over a product of two compact Riemann surfaces  and Calabi-Yau surfaces, respectively. V. Tosatti \cite{TV} researched adiabatic limits of Ricci-flat K\"{a}hler metrics on a
  Calabi-Yau manifold. J. Fine \cite{FJ04} proved the existence of constant scalar curvature K\"{a}hler metrics on certain complex surfaces, via an adiabatic limit.
   For more interesting papers about the adiabatic limit, we refer to \cite{SJ15}, \cite{SA19} and the references therein.

Now we recall the construction  of the toric fibration over a Calabi-Yau manifold in \cite{GP04}. Let $\beta_{1}$ and $\beta_{2}$ be primitive harmonic $(1,1)$ forms on an $n$-dimension Calabi-Yau manifold $(M,\omega)$  $(\omega$ is a K\"{a}hler-Ricci flat metric on $M$)  with a nowhere vanishing holomorphic $(n,0)$-form $\Omega$  satisfying: $[\frac{\beta_{1}}{2\pi}],[\frac{\beta_{2}}{2\pi}]\in H^{1,1}(M,\mathbb{Z})$. The toric fibration $\pi: X\rightarrow M$ is determined by $\beta_{1}$ and $\beta_{2}$.

In this paper, we consider $(M=M_{1}\times M_2, \omega_{\epsilon}=\epsilon^{2}\omega_{1}\oplus\omega_{2})$, where $(M_{1},\omega_{1}), (M_{2},\omega_{2})$ are $n_{1}$-dimension, $n_{2}$-dimension Ricci flat K\"{a}hler manifolds, respectively. Let $E_{i}$ be  degree zero stable holomorphic vector bundles with Hermitian-Einstein metric $H_{i}$ on $M_{i}$ with respect to $\omega_{i}, i=1,2$. Denote by $\pi_{i}:M\rightarrow M_{i}, i=1,2$  the projection map. Consider the bundle $(E=\pi_{1}^{*}E_{1}\oplus\pi_{2}^{*}E_{2}, H=\pi_{1}^{*}H_{1}\oplus \pi_{2}^{*}H_{2})$, then $H$ is a Hermitian-Einstein metric with respect to $\omega_{\epsilon}.$ Let $F$ be the Chern curvature of $H$.
 Suppose there exist $\tilde{\beta}_{1}, \tilde{\beta}_{2}\in H^{1,1}(M_{1}, \mathbb{Z}), \tilde{\beta}_{3}, \tilde{\beta}_{4}\in H^{1,1}(M_{2}, \mathbb{Z})$ that are primitive harmonic $(1,1)$ forms with respect to $\omega_{1}, \omega_{2}$, respectively. 
Set $\beta_{1}=\pi_{1}^{*}\tilde{\beta}_{1}\oplus\pi^{*}_{2}\tilde{\beta}_{3}, \beta_{2}=\pi^{*}_{1}\tilde{\beta}_{2}\oplus\pi^{*}_{2}\tilde{\beta}_{4} $, then $\beta_{1}, \beta_{2}\in H^{1,1}(M, \mathbb{Z})$ are primitive  harmonic $(1,1)$ forms with respect to $\omega_{\epsilon}$ for any $\epsilon.$ 
Then Fu-Yau equations have the following form \cite{FuY08}
\begin{equation}\label{Fu-Yau equation 0}
\begin{split}
\ddbar(e^{\vp_{\epsilon}}\omega_{\epsilon}-\alpha e^{-\vp_{\epsilon}}\rho) \wedge\omega_{\epsilon}^{n-2}
&+n\alpha(\ddbar\vp_{\epsilon})^{2}\wedge\omega_{\epsilon}^{n-2}\\
&+\mu\omega_{\epsilon}^{n}=0,
\end{split}
\end{equation}
 where $$\rho=\frac{1}{2}\sqrt{-1}tr(\partial B\wedge\ol{\partial}B^{*}\cdot g^{-1}_{\epsilon})$$ are smooth $(1,1)$ forms and
  $$\mu=\frac{-\alpha(tr R_{\epsilon}\wedge R_{\epsilon}-tr F\wedge F)\wedge\omega_{\epsilon}^{n-2}}{\omega_{\epsilon}^{n}}+\frac{\beta_{1}^{2}\wedge\omega_{\epsilon}^{n-2}}{\omega_{\epsilon}^{n}}+\frac{\beta_{2}^{2}\wedge\omega^{n-2}_{\epsilon}}{\omega_{\epsilon}^{n}}$$
are smooth functions satisfying $\int_{M}\mu\omega^{n}=0$.
It follows
 \begin{equation*}
\alpha=\left(\int_{M}\Big(tr R_{\epsilon}\wedge R_{\epsilon}-tr F\wedge F\Big)\wedge\omega_{\epsilon}^{n-2}\right)^{-1}\int_{M}\Big(\beta_{1}^{2}\wedge\omega_{\epsilon}^{n-2}+\beta_{2}^{2}\wedge\omega^{n-2}_{\epsilon}\Big).
\end{equation*}

 Denote by $R_{\epsilon}$ the curvature tensor of $\omega_{\epsilon}$. It is known that $B$ only depends on $\beta_{1}, \beta_{2}$ (see \cite[p.380]{FuY08}) and $\partial B\wedge\ol{\partial}B^{*}\cdot g^{-1}_{\epsilon}$ is a $(1,1)$ form with value in the endomorphism bundle of $TM$. $B^{*}$ is the  conjugate transpose of $B$.   By tr, we denote the trace of
the endomorphism bundle of $TM$ or $E$.  Since $tr R_{\epsilon}\wedge R_{\epsilon}$ and $F$ are independent of $\epsilon$,
\begin{equation}\label{order}
 \|\rho\|_{C^{0}(\omega)}\leq C\epsilon^{-2}, \|\mu\|_{L^{\infty}}\leq C\epsilon^{-t},
 \end{equation}
where $\|\rho\|_{C^{0}(\omega)}$ means the $C^{0}$ norm of $\rho$ with respect to $\omega$,  $\omega=\omega_{1}\oplus\omega_{2}$ and $t\geq4$ is a constant.

We consider general Fu-Yau equations,
 \begin{equation}\label{Fu-Yau equation 1}
\begin{split}
\ddbar(e^{\vp_{\epsilon}}\omega_{\epsilon}-\alpha e^{-\vp_{\epsilon}}\rho_{\epsilon}) \wedge\omega_{\epsilon}^{n-2}
&+n\alpha(\ddbar\vp_{\epsilon})^{2}\wedge\omega_{\epsilon}^{n-2}\\
&+\mu_{\epsilon}\omega_{\epsilon}^{n}=0,
\end{split}
\end{equation}
where $\rho_{\epsilon}$ is a $(1,1)$ form and $\mu_{\epsilon}$ is a smooth function satisfying $\int_{M}\mu_{\epsilon}\omega_{\epsilon}^{n}=0$. Suppose there exist $t\geq 4$ and uniform constants $K$ such that 
\begin{equation}\label{uniform function}
\|\epsilon^{t}\mu_{\epsilon}\|_{C^{2}(\omega_{\epsilon})}\leq K, \|\epsilon^{t}\rho_{\epsilon}\|_{C^{2}(\omega_{\epsilon})}\leq K, |\epsilon^{t-4}\alpha|\leq K.
\end{equation}

Using the argument of \cite{CHZ19}, we have the following existences and a priori estimates of solutions:
\begin{theorem}\label{Estimate}
	There exist  solutions  $\vp_{\epsilon}$ of \eqref{Fu-Yau equation 1} satisfying the elliptic condition
	\begin{equation}\label{Elliptic condition}
	\tilde{\omega}_{\epsilon} = e^{\vp_{\epsilon}}\omega_{\epsilon}+\alpha e^{-\vp_{\epsilon}}\rho_{\epsilon}+2n\alpha\ddbar\vp_{\epsilon} \in \Gamma_{2}(M),
	\end{equation}
	and the normalization condition
	\begin{equation}\label{Normalization condition}
	\|e^{-\vp_{\epsilon}}\|_{L^{1}(\omega_{\epsilon})} = A\epsilon^{2n_{1}+t},
	\end{equation}
	where $\Gamma_{2}(M)$ is the space of the second convex $(1,1)$-forms. In addition, we can find a uniform small constant $A_{0}>0$  and a uniform constant $M_{0}$ depending only on  $K$ and $(M,\omega)$ such that for any positive $A\leq  A_{0} $, we have
	\begin{equation*}
	\frac{1}{M_{0}A\epsilon^{t}}\leq e^{\vp_{\epsilon}}\leq \frac{M_{0}}{A\epsilon^{t}}
	\end{equation*}
	and  	\begin{equation*}
	|\de\vp|_{\omega_{\epsilon}}+ |\ddbar\vp|_{\omega_{\epsilon}}\leq M_{0}.
	\end{equation*}
\end{theorem}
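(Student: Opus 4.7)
The plan is to import the continuity-method framework of Chu-Huang-Zhu \cite{CHZ19} and thread the parameter $\epsilon$ carefully through each a priori estimate so that all constants depend only on $K$ and the background geometry $(M,\omega)$. First I would introduce a deformation
\begin{equation*}
\ddbar(e^{\vp}\omega_{\epsilon}-s\alpha e^{-\vp}\rho_{\epsilon})\wedge\omega_{\epsilon}^{n-2}
+sn\alpha(\ddbar\vp)^{2}\wedge\omega_{\epsilon}^{n-2}+s\mu_{\epsilon}\omega_{\epsilon}^{n}=0,\quad s\in[0,1],
\end{equation*}
coupled with the $\epsilon$-rescaled normalization \eqref{Normalization condition} and the ellipticity constraint \eqref{Elliptic condition}. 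At $s=0$ the equation admits a trivial constant solution determined by the normalization, and openness of the set of admissible $s$ follows from the implicit function theorem: the linearization is elliptic because $\tilde{\omega}_{\epsilon}\in\Gamma_{2}(M)$.

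For closedness I would replay the three a priori estimates of \cite{CHZ19}, keeping track of all $\epsilon$-powers. For the $C^{0}$ estimate I would rewrite the equation as a second-order PDE for $u:=e^{\vp_{\epsilon}}$ with perturbations controlled by \eqref{uniform function}, and combine it with \eqref{Normalization condition} via Moser iteration to obtain the two-sided bound $\frac{1}{M_{0}A\epsilon^{t}}\le e^{\vp_{\epsilon}}\le \frac{M_{0}}{A\epsilon^{t}}$; the choice $A\epsilon^{2n_{1}+t}$ in \eqref{Normalization condition} precisely cancels the $\omega_{\epsilon}^{n}\sim\epsilon^{2n_{1}}$ volume factor, leaving the expected $(A\epsilon^{t})^{-1}$ scale. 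For the $C^{1}$ estimate I would apply the maximum principle to $\log|\de\vp|^{2}_{\omega_{\epsilon}}+\Psi(\vp)$ for a suitable auxiliary function, substituting in the $C^{0}$ bound to absorb the terms built from $\mu_{\epsilon}$ and $\rho_{\epsilon}$. For the $C^{2}$ estimate I would apply the maximum principle to a quantity of the shape $\log\operatorname{tr}_{\omega_{\epsilon}}\tilde{\omega}_{\epsilon}-B\vp+C|\de\vp|^{2}_{\omega_{\epsilon}}$, using the $\Gamma_{2}$ condition to extract a positive second-order contribution dominating the bad terms.

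The main obstacle is the uniformity in $\epsilon$ of the $C^{2}$ estimate. The inequality \eqref{order} forces $|\alpha|\sim\epsilon^{4-t}$, and in the Laplacian-type computation for $\log\operatorname{tr}_{\omega_{\epsilon}}\tilde{\omega}_{\epsilon}$ one encounters terms of the form $\alpha\,|\ddbar\vp|^{2}_{\omega_{\epsilon}}$, $\alpha e^{-\vp_{\epsilon}}\nabla\rho_{\epsilon}$ and $\alpha e^{-\vp_{\epsilon}}\rho_{\epsilon}\nabla\vp$; the hypotheses \eqref{uniform function} are calibrated precisely so that, after substituting the $C^{0}$ bound $e^{-\vp_{\epsilon}}\sim A\epsilon^{t}$, every such term is $O(1)$, provided $t\ge 4$. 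Once the uniform $C^{2}$ bound is in place, Evans-Krylov theory together with Schauder bootstrapping upgrades the estimate to all higher orders, closing the continuity method at $s=1$ and producing a solution at $s=1$ satisfying the stated bounds.
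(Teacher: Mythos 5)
Your overall architecture — deform with a parameter $s$, establish openness by the implicit function theorem, and establish closedness by a priori $C^0$, $C^1$, $C^2$ estimates with $\epsilon$-tracked constants, then bootstrap — is the same as the paper's, and your observation that the normalization $\|e^{-\vp_\epsilon}\|_{L^1(\omega_\epsilon)}=A\epsilon^{2n_1+t}$ is calibrated to cancel the $\omega_\epsilon^n\sim\epsilon^{2n_1}$ volume factor is exactly right. But the proposal misses the paper's central technical point and, as written, the $C^1$/$C^2$ step would fail.

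The key device in the paper is to multiply \eqref{Fu-Yau equation 1} by $\epsilon^t$ and work with $\tilde\vp_\epsilon=\vp_\epsilon+t\ln\epsilon$, $\tilde\alpha=\epsilon^t\alpha$, $\tilde\rho_\epsilon=\epsilon^t\rho_\epsilon$, $\tilde\mu_\epsilon=\epsilon^t\mu_\epsilon$. By \eqref{uniform function} this makes all coefficient functions uniformly bounded, but it forces $\tilde\alpha=\epsilon^{t-4}\alpha\cdot\epsilon^4\to 0$. Now the difficulty: the gradient and second-order estimates of \cite{CHZ19} (their (3.9) and (4.6)) contain explicit $\frac{1}{\alpha}$ factors, so if you quote them verbatim in the rescaled equation with $\tilde\alpha$, the resulting constants blow up as $\tilde\alpha\to 0$. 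Your claim that "the hypotheses \eqref{uniform function} are calibrated so that every such term is $O(1)$" does not address this: the problematic terms are $\frac{n-1}{\alpha}e^{-\vp}|\de\vp|^2 f$ and $\frac{n-1}{2\alpha}e^{-\vp}\textrm{Re}(\sum_k f_k\vp_{\bar k})$, and they are not small by the $C^0$ bound alone. The paper's fix is the structural identity $f=\alpha\tilde f$ (Eq.\ \eqref{alphaf}), where $\tilde f$ defined by \eqref{Definition of tilde f} is bounded with two bounded derivatives independently of $\alpha$; substituting this cancels all $1/\alpha$ factors and yields Propositions \ref{Gradient estimate} and \ref{Second order estimate} with constants uniform as $\alpha\to 0$. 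Without identifying and exploiting this cancellation, the estimates are not uniform and the continuity method cannot close.

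Separately, your proposed test quantity $\log\operatorname{tr}_{\omega_\epsilon}\tilde\omega_\epsilon-B\vp+C|\de\vp|^2_{\omega_\epsilon}$ is Monge--Amp\`ere style; the Fu--Yau equation is a $\sigma_2$-Hessian equation, where $\operatorname{tr}_{\omega_\epsilon}\tilde\omega_\epsilon$ does not control $|\ddbar\vp|$, and the concavity structure is different. The paper (following \cite{CHZ19}) instead maximizes $Q=|\de\dbar\vp|^2_g+B|\de\vp|^2_g$ and uses $F^{i\bar j}=\partial\sigma_2/\partial\hat g_{i\bar j}$ together with the $\Gamma_2$ condition; this is not an interchangeable choice. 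Finally, for the bootstrap you cite Evans--Krylov/Schauder; the paper instead invokes the $C^{2,\alpha}$ estimate of Tosatti--Wang--Weinkove--Yang \cite{TWWY15} after showing via \eqref{non-degenrate} that $\sigma_2(\tilde\omega)$ is pinched between $\frac{1}{CA^2}$ and $\frac{C}{A^2}$ so the equation is uniformly elliptic and non-degenerate. That last point is only a reference choice and not a gap, but the missing $f=\alpha\tilde f$ cancellation and the unsuitable $C^2$ test function are genuine problems.
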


Note there exists a subsequence of $\epsilon^{t}\mu_{\epsilon}$, still denoted by $\epsilon^{t}\mu_{\epsilon}$, weakly converge to a function $\mu_{0}$ in $L^{2}(M,\omega)$. 
 We further prove
\begin{theorem}\label{convergence}
	Let $\vp_{\epsilon}$ be the solution of \eqref{Fu-Yau equation 1} defined in Theorem \ref{Estimate}.
	There exists a subsequence, still denoted by $\vp_{\epsilon}$, such that $\vp_{\epsilon}+t\ln\epsilon$  converge to a function $ \hat{\vp}$ in $C^{1,\beta}$ on $M_{2}$ which is a weak solution of
\begin{equation}\label{limit of equation}
\ddbar (e^{\hat{\vp}})\wedge\omega_{2}^{n_{2}-1}+\hat{\mu}\omega_{2}^{n_{2}}=0,
\end{equation}
where $\hat{\mu}=\frac{\int_{M_{1}}\mu_{0}\omega_{1}^{n_{1}}}{\int_{M_{1}}\omega_{1}^{n_{1}}}$.
\end{theorem}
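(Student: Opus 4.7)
The plan is to set $u_\epsilon:=\vp_\epsilon+t\ln\epsilon$, so that $e^{u_\epsilon}$ is uniformly bounded above and below by Theorem \ref{Estimate}, together with $|\de u_\epsilon|_{\omega_\epsilon}+|\ddbar u_\epsilon|_{\omega_\epsilon}\leq M_0$. Because $g_\epsilon=\epsilon^2 g_1\oplus g_2$, these $\omega_\epsilon$-bounds translate into $M_1$-components of $\de u_\epsilon$ and $\ddbar u_\epsilon$ being $O(\epsilon)$ in $\omega_1$, while the $M_2$-components are $O(1)$ in $\omega_2$. Hence $\{u_\epsilon\}$ is uniformly bounded in $W^{2,\infty}(M,\omega)$, so by compact embedding we extract a subsequence with $u_\epsilon\to\hat{\vp}$ in $C^{1,\beta}(M)$ for any $\beta<1$; the vanishing of the $M_1$-derivatives forces $\hat{\vp}$ to be constant on each $M_1$-fiber, i.e.\ the pullback of a function on $M_2$.

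Next, multiply \eqref{Fu-Yau equation 1} by $\epsilon^t$ and rewrite in terms of $u_\epsilon$, using $\epsilon^t e^{\vp_\epsilon}=e^{u_\epsilon}$ and $e^{-\vp_\epsilon}=\epsilon^t e^{-u_\epsilon}$. By \eqref{uniform function}, $\alpha\epsilon^{2t}=O(\epsilon^{t+4})$ and $\alpha\epsilon^t=O(\epsilon^4)$; combined with the $\omega_\epsilon$-bounds on $\epsilon^t\rho_\epsilon$ and on $\ddbar u_\epsilon$, both the $\rho_\epsilon$-term and the quadratic $(\ddbar u_\epsilon)^2$-term are pointwise bounded by $O(\epsilon^4)\omega_\epsilon^n$. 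Because $\ddbar\omega_\epsilon=0$, the rescaled equation therefore collapses, modulo such errors, to $\ddbar(e^{u_\epsilon})\wedge\omega_\epsilon^{n-1}+\epsilon^t\mu_\epsilon\,\omega_\epsilon^n=0$.

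To extract the PDE on $M_2$, test the rescaled equation against an arbitrary $\psi\in C^\infty(M_2)$ pulled back to $M$, and integrate by parts to shift $\ddbar$ onto $\psi$. Expanding $\omega_\epsilon^{n-1}$ and $\omega_\epsilon^n$ by the binomial theorem in powers of $\epsilon^2$, the only contributions that survive are of order $\epsilon^{2n_1}$: indeed $\ddbar\psi$ has no $M_1$-directions, and $\ddbar\psi\wedge\omega_2^{n_2}=0$ by degree on $M_2$. Dividing by $\epsilon^{2n_1}$ and applying Fubini reduces the identity to an integral over $M_2$ involving the fiber averages $\int_{M_1}e^{u_\epsilon}\omega_1^{n_1}$ and $\int_{M_1}\epsilon^t\mu_\epsilon\omega_1^{n_1}$. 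The uniform convergence $e^{u_\epsilon}\to e^{\hat\vp}$ disposes of the first term; the weak convergence $\epsilon^t\mu_\epsilon\rightharpoonup\mu_0$ in $L^2(M,\omega)$, tested against the smooth pullback $\psi$, disposes of the second. Normalizing by the fiber volume $\int_{M_1}\omega_1^{n_1}$ produces the coefficient $\hat{\mu}$ and the claimed weak form of \eqref{limit of equation}.

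The main obstacle is the careful bookkeeping of $\epsilon$-scales: dividing by $\epsilon^{2n_1}$ amplifies every error by a large factor, so one must check that the discarded terms truly live at the volume scale $\omega_\epsilon^n$ rather than at the fixed scale $\omega^n$. This is exactly why the hypotheses in \eqref{uniform function} are stated with respect to $\omega_\epsilon$ and why the margin $t\geq 4$ is needed, providing the $O(\epsilon^4)$ slack that survives the rescaling. A secondary point is that, although $\epsilon^t\mu_\epsilon$ only converges weakly, the test function $\psi$ is a pullback from $M_2$ and hence acts as a fiberwise constant, so the weak limit collapses naturally to the fiber average defining $\hat{\mu}$.
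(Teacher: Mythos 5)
Your proof is essentially correct, and it reaches the same conclusion as the paper, but it takes a genuinely different (and in one place more elementary) route at the key structural step: showing that the limit $\hat{\vp}$ descends to a function on $M_2$. The paper establishes this through Proposition~\ref{near the average}: it sets $\tilde{\psi}=\epsilon^{-2}(\tilde\vp_\epsilon-\underline{\tilde\vp}_\epsilon)$, observes that the associated complex Monge--Amp\`ere measure along the $M_1$-fibers is uniformly bounded, and invokes the Ko\l odziej--Nguyen $L^\infty$ estimate \cite{KN15} to obtain the quantitative bound $|\tilde\vp_\epsilon-\underline{\tilde\vp}_\epsilon|\leq C\epsilon^2$, from which it follows that $\hat\vp=\lim\underline{\tilde\vp}_\epsilon$ is a function on $M_2$. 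You instead read off, directly from the anisotropy of $\omega_\epsilon=\epsilon^2\omega_1\oplus\omega_2$ and the uniform bound $|\de u_\epsilon|_{\omega_\epsilon}\leq M_0$ of Theorem~\ref{Estimate}, that the $M_1$-components of $\de u_\epsilon$ are $O(\epsilon)$ in $\omega_1$; combined with the $C^{1,\beta}$ convergence this immediately kills the $M_1$-derivatives of $\hat\vp$. This is cleaner and avoids the Monge--Amp\`ere machinery, at the price of losing the quantitative rate $O(\epsilon^2)$ that Proposition~\ref{near the average} provides. The paper uses that rate in the proof (to split $e^{\tilde\vp_\epsilon}=e^{\underline{\tilde\vp}_\epsilon}+(e^{\tilde\vp_\epsilon}-e^{\underline{\tilde\vp}_\epsilon})$ and handle the two pieces separately), but you are right that the uniform convergence $e^{u_\epsilon}\to e^{\hat\vp}$ already suffices for the limit to pass, so the rate is not logically needed for the theorem as stated. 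The remainder of your argument---multiplying by $\epsilon^t$, testing against pullbacks from $M_2$, integrating by parts, isolating the $\epsilon^{2n_1}$-order terms by degree counting, and using that $\alpha\epsilon^t=O(\epsilon^4)$ to discard the $\rho_\epsilon$- and $(\ddbar u_\epsilon)^2$-terms---matches the paper's proof step for step.

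One small imprecision: the estimates of Theorem~\ref{Estimate} control $|\de u_\epsilon|_{\omega_\epsilon}$ and $|\ddbar u_\epsilon|_{\omega_\epsilon}$, not the full real Hessian, so the claim of a uniform bound in $W^{2,\infty}(M,\omega)$ is stronger than what is available. What you actually have is a uniform bound on $\Delta_\omega u_\epsilon$, which via $L^p$ elliptic estimates gives uniform $W^{2,p}$ bounds for every $p<\infty$, and thus uniform $C^{1,\beta}$ bounds for every $\beta<1$ by Sobolev embedding; that is already enough to extract the $C^{1,\beta}$-convergent subsequence and to conclude $\de_1\hat\vp=0$, so the conclusion stands.
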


For \eqref{Fu-Yau equation 0}, choose $t$ such that 
\begin{equation*}
\|\epsilon^{t}\mu\|_{C^{2}(\omega_{\epsilon})}\leq K, \|\epsilon^{t}\rho\|_{C^{2}(\omega_{\epsilon})}\leq K, |\epsilon^{t-4}\alpha|\leq K .
\end{equation*}
 we have
\begin{equation*}
\begin{split}
\epsilon^{t}\mu\rightarrow\mu_{0}=&\frac{-\epsilon^{t-4}\alpha_{0}(tr R_{1\epsilon}\wedge R_{1\epsilon}-tr F_{1}\wedge F_{1})\wedge\omega_{1}^{n-2}}{\omega_{1}^{n}}\\
&+\epsilon^{t-4}\big(\frac{\tilde{\beta}_{1}^{2}\wedge\omega_{1}^{n_{1}-2}}{\omega_{1}^{n}}
+\frac{\tilde{\beta}_{2}^{2}\wedge\omega_{1}^{n_{1}-2}}{\omega_{1}^{n_{1}}}\big),
\end{split}
\end{equation*}
where $\alpha_{0}$ is a constant such that $\int_{M}\mu_{0}\omega^{n}=0.$ It is  a function on $M_{1}$, therefore $\hat{\mu}$ is a constant. Note $\int_{M}\mu_{0}\omega^{n}=0$. We have $\hat{\mu}=0$.  It follows from Theorem \eqref{convergence}:

	\begin{corollary}
		Let $\vp_{\epsilon}$ be the solution of \eqref{Fu-Yau equation 0} defined in Theorem \ref{Estimate}.
		There exists a subsequence, still denoted by $\vp_{\epsilon}$, such that $\vp_{\epsilon}+t\ln\epsilon$  converge to a function $ \hat{\vp}$ in $C^{1,\beta}$ on $M_{2}$ which is a weak solution of
		\begin{equation*}
		\ddbar (e^{\hat{\vp}})\wedge\omega_{2}^{n_{2}-1}=0.
		\end{equation*}
		It follows $\hat{\vp}$ is a constant.
	\end{corollary}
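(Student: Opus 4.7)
The plan is to deduce the Corollary from Theorem \ref{convergence} by confirming its hypotheses for the specific Fu-Yau source, using the special product structure of $\mu$ to force $\hat\mu = 0$, and finally invoking elliptic regularity on the closed manifold $M_2$ to conclude $\hat\vp$ is constant.

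First I would check that the hypotheses \eqref{uniform function} of Theorem \ref{convergence} hold with $\mu_\epsilon = \mu$ and $\rho_\epsilon = \rho$ from \eqref{Fu-Yau equation 0}. The scaling estimates \eqref{order} yield a $t \ge 4$ with $\|\epsilon^t \mu\|_{C^2(\omega_\epsilon)}$ and $\|\epsilon^t \rho\|_{C^2(\omega_\epsilon)}$ uniformly bounded, and a direct integration gives $|\epsilon^{t-4}\alpha| \le K$. Along a subsequence, $\epsilon^t \mu$ converges weakly in $L^2(M,\omega)$ to the explicit $\mu_0$ displayed just before the Corollary. Because $\mathrm{tr}\,R_\epsilon \wedge R_\epsilon$, the Hermitian-Einstein curvatures $F_i$, and $\tilde\beta_1, \tilde\beta_2$ are all pulled back from $M_1$, the limit $\mu_0$ is itself the pullback of a function on $M_1$. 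Averaging in the $M_1$ fibre direction then yields a constant $\hat\mu$ on $M_2$; the normalization $\int_M \mu\,\omega_\epsilon^n = 0$, preserved under weak $L^2$ convergence, gives $\int_M \mu_0\,\omega^n = 0$, and factoring this integral over $M_1 \times M_2$ forces $\hat\mu = 0$.

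Theorem \ref{convergence} then supplies the $C^{1,\beta}(M_2)$ subsequential limit $\hat\vp$ of $\vp_\epsilon + t\ln\epsilon$, satisfying $\ddbar(e^{\hat\vp}) \wedge \omega_2^{n_2-1} = 0$ weakly. To finish, I would recast this as a weak Laplace equation for $u := e^{\hat\vp}$: using the pointwise identity $n_2\, \ddbar u \wedge \omega_2^{n_2-1} = (\Delta_{\omega_2} u)\,\omega_2^{n_2}$ and pairing with test functions $\eta \in C^\infty(M_2)$, the equation becomes $\int_{M_2} u\, \ddbar \eta \wedge \omega_2^{n_2-1} = 0$ for every $\eta$, i.e.\ $u$ is weakly harmonic on $M_2$. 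Weyl's lemma upgrades $u = e^{\hat\vp}$ to a smooth harmonic function on the compact connected manifold $M_2$, whereupon the maximum principle forces $u$ to be constant; consequently $\hat\vp$ is constant.

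There is no serious technical obstacle; the argument is a direct translation of Theorem \ref{convergence} plus a classical harmonic-function fact. The one point requiring genuine care is the derivation of $\hat\mu = 0$, which uses \emph{both} the $M_1$-pullback structure of $\mu_0$ (making $\hat\mu$ an $M_2$-constant) and the zero-integral normalization (making that constant vanish).
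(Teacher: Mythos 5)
Your proposal is correct and follows essentially the same route as the paper: verify the hypotheses of Theorem \ref{convergence}, use the $M_1$-pullback structure of $\mu_0$ together with the vanishing integral $\int_M \mu_0\,\omega^n=0$ to get $\hat\mu=0$, then conclude $\hat\vp$ is constant via harmonicity of $e^{\hat\vp}$ on the compact manifold $M_2$. The only difference is presentational: you spell out the elliptic-regularity/Weyl's-lemma step and the preservation of the zero-integral condition under weak $L^2$ convergence, both of which the paper leaves implicit.
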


%

In section 2, we give the uniform estimates,  using the Chu-Huang-Zhu's argument \cite{CHZ19}. Compared to \cite{CHZ19}, there are something that are different.
First, the Sobolev inequality with respect to $\omega_{\epsilon}$  depends on $\epsilon$. 
In addition, there are terms in equations \eqref{Fu-Yau equation 1} involving $\frac{1}{\epsilon}$, we need some delicate calculations in the $C^{0}$ estimates. 
	In fact, we consider the function $\vp_{\epsilon}+\ln{\epsilon}^{t}$ which satisfy the Fu-Yau equation that the terms in equation are uniformly bounded for $\epsilon$, but $\alpha$ will tend to $0$.  It is also important to choose  an appropriate  $L^{1}$ norm of $e^{-\vp_{\epsilon}}$.  
	  When we obtain the $C^{0}$ estimates, we can use the Chu-Huang-Zhu's result to prove the $C^{1}$ and $C^{2}$ estimates. However, there are $\frac{1}{\alpha}$ in the proof of \cite{CHZ19}.	  We show that  Chu-Huang-Zhu's $C^{1}$ and $C^{2}$ estimates are uniform when $\alpha\rightarrow 0$ (see subsection 2.2). 
	  
	  In section 3, to prove the weak convergence, we use an argument of Tosatti \cite{TV}. By the $L^{\infty}$ estimates for Monge-Amp\`ere equation \cite{KN15}, we can conclude that $\vp_{\epsilon}-\frac{1}{Vol(M_{1})}\int_{M_{1}}\vp_{\epsilon}\omega_{1}^{n_1}$ tend to zero. Then, we can prove that a subsequence of $\vp_{\epsilon}$ converge to a function on $M_{2}$, by Theorem \ref{Estimate}.
	  
	\vspace{.2cm}
	{\it Acknowledgements. }
	 I would like to thank Professor J. Chen
	for suggesting this problem and for very useful	discussions.
	This work was carried out while   I was visiting the Department of Mathematics at the University of British Columbia, supported by the China Scholarship Council (File No. 201906340217). I would like to thank UBC for the hospitality and support.

\section{Proof of Theorem \ref{Estimate}}
In this section, we prove Theorem \ref{Estimate}. 
\subsection{$C^{0}$ estimates}First, we give the zero order estimates.
\begin{proposition}\label{C0 estimate}
	Let $\vp_{\epsilon}$ be a smooth solution of (\ref{Fu-Yau equation 1}). There exist constants $A_{0}$ and $M_{0}$ depending only on $K$ and $(M,\omega)$ such that if
 	\begin{equation}
	e^{-\vp_{\epsilon}} \leq \epsilon^{t}\delta_{0} := \epsilon^{t}\sqrt{\frac{1}{2\epsilon^{2t}|\alpha|\|\rho_{\epsilon}\|_{C^{0}(\omega_{\epsilon})}+1}}
	\end{equation}
	and
	\begin{equation}\label{normalized}
	 \|e^{-\vp_{\epsilon}}\|_{L^{1}(\omega_{\epsilon})} = A\epsilon^{2n_{1}+t} \leq A_{0}\epsilon^{2n_{1}+t},
	\end{equation}
we have
\begin{equation*}
\frac{1}{M_{0}A\epsilon^{t}}\leq e^{\vp_{\epsilon}}\leq \frac{M_{0}}{A\epsilon^{t}}.
\end{equation*}
\end{proposition}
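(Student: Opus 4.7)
The plan is to rescale by setting $\psi_\epsilon := \varphi_\epsilon + t\ln\epsilon$, so that $e^{\varphi_\epsilon}=\epsilon^{-t}e^{\psi_\epsilon}$, $e^{-\varphi_\epsilon}=\epsilon^t e^{-\psi_\epsilon}$, and, after multiplying \eqref{Fu-Yau equation 1} by $\epsilon^t$, the equation becomes
\begin{equation*}
\ddbar\bigl(e^{\psi_\epsilon}\omega_\epsilon-(\alpha\epsilon^{2t})e^{-\psi_\epsilon}\rho_\epsilon\bigr)\wedge\omega_\epsilon^{n-2}+n(\alpha\epsilon^t)(\ddbar\psi_\epsilon)^2\wedge\omega_\epsilon^{n-2}+(\epsilon^t\mu_\epsilon)\omega_\epsilon^n=0.
\end{equation*}
Using the factorizations $\alpha\epsilon^t=(\epsilon^{t-4}\alpha)\epsilon^4$ and $\alpha\epsilon^{2t}\rho_\epsilon=(\epsilon^{t-4}\alpha)(\epsilon^t\rho_\epsilon)\epsilon^{t+4}$, assumption \eqref{uniform function} forces every $\alpha$-term to be $O(\epsilon^4)$ in $C^2$, while $\epsilon^t\mu_\epsilon$ is uniformly $C^2$-bounded. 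The hypothesis $e^{-\varphi_\epsilon}\le\epsilon^t\delta_0$ becomes the $\epsilon$-free pointwise bound $e^{-\psi_\epsilon}\le\delta_0$, and, since $\omega_\epsilon^n=\binom{n}{n_1}\epsilon^{2n_1}\omega_1^{n_1}\wedge\omega_2^{n_2}$, the normalization \eqref{normalized} becomes $\|e^{-\psi_\epsilon}\|_{L^1(\omega)}=A/\binom{n}{n_1}$ with $\omega=\omega_1\oplus\omega_2$. Thus the proposition reduces to an $\epsilon$-independent two-sided bound $1/(M_0 A)\le e^{\psi_\epsilon}\le M_0/A$ for the rescaled equation.

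For the upper bound I would run the Moser iteration of Chu--Huang--Zhu on the rescaled equation. Testing against $e^{p\psi_\epsilon}$ and integrating by parts, the leading $\ddbar(e^{\psi_\epsilon}\omega_\epsilon)\wedge\omega_\epsilon^{n-2}$ term produces a good gradient quantity $\sim\int_M|\partial e^{(p+1)\psi_\epsilon/2}|_{\omega_\epsilon}^2\omega_\epsilon^n$, while the $\alpha$-contributions carry the $O(\epsilon^4)$ prefactors identified above and can be absorbed via Cauchy--Schwarz, using the pointwise bound $e^{-\psi_\epsilon}\le\delta_0$ to control $e^{-\psi_\epsilon}\rho_\epsilon$. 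Because the Sobolev constant of $\omega_\epsilon$ degenerates as $\epsilon\to 0$, every Sobolev step must be performed on the fixed $(M,\omega)$ and converted via $\omega_\epsilon^n\asymp\epsilon^{2n_1}\omega^n$, with the $\epsilon$-powers tracked inside the iteration exponents. The iteration then yields $\sup_M e^{\psi_\epsilon}\le C\|e^{\psi_\epsilon}\|_{L^{p_0}(\omega)}$ for some fixed $p_0$; combining this with a maximum-principle analysis of the equation at a near-maximum of $\psi_\epsilon$, in which the smallness of $A\le A_0$ enters to close the inequality, produces the desired $e^{\psi_\epsilon}\le M_0/A$.

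The lower bound is obtained by dualizing: testing against $e^{-p\psi_\epsilon}$ and running the analogous iteration gives $\sup_M e^{-\psi_\epsilon}\le C\|e^{-\psi_\epsilon}\|_{L^1(\omega)}\le CA$, i.e.\ $e^{\psi_\epsilon}\ge 1/(M_0 A)$. The main obstacle throughout is the $\epsilon$-dependence of the Sobolev inequality on $(M,\omega_\epsilon)$: one must reroute every Sobolev step through the fixed background $\omega$ and absorb the discrepancy $\omega_\epsilon^n\asymp\epsilon^{2n_1}\omega^n$ cleanly, which is precisely what the exponent $2n_1+t$ in \eqref{normalized} is engineered to balance. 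A secondary difficulty is that the original Chu--Huang--Zhu estimates occasionally divide by $\alpha$; since the rescaled problem sends $\alpha\to 0$, each such step must be inspected to ensure that the constants remain bounded as $\alpha\to 0$, a verification the author postpones to Subsection~2.2.
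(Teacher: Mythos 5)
Your rescaling $\psi_\epsilon=\varphi_\epsilon+t\ln\epsilon$, the observation that all $\alpha$-terms become $O(\epsilon^4)$-small after multiplying the equation by $\epsilon^t$, and the plan to route every Sobolev step through the fixed background $\omega$ while tracking the factor $\omega_\epsilon^n\asymp\epsilon^{2n_1}\omega^n$ all match what the paper does, and are correct. The Moser iteration for the infimum estimate also closes exactly as you describe: since the normalization controls $\|e^{-\psi_\epsilon}\|_{L^1(\omega_\epsilon)}$, the iteration directly gives $\sup e^{-\psi_\epsilon}\le CA$.

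The gap is the supremum estimate. Your Moser iteration yields $\sup_M e^{\psi_\epsilon}\le C\epsilon^{-2n_1}\|e^{\psi_\epsilon}\|_{L^1(\omega_\epsilon)}$, but the normalization \eqref{normalized} controls $e^{-\psi_\epsilon}$, not $e^{\psi_\epsilon}$, so the $L^1$ norm of $e^{\psi_\epsilon}$ still has to be estimated, and this is the real content of the proposition. You propose closing this with ``a maximum-principle analysis at a near-maximum of $\psi_\epsilon$, in which the smallness of $A\le A_0$ enters,'' but that is not how the argument goes and it is not clear it can be made to work: the operator is a $\sigma_2$-type Hessian operator and the equation does not produce a sign at a pointwise maximum of $\psi_\epsilon$ in any obvious way. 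The paper instead makes \emph{integral-level} use of the already-proved infimum estimate: it sets $U=\{e^{-\psi_\epsilon}\ge A/2\}$, shows from $\sup e^{-\psi_\epsilon}\le CA$ and the normalization that $\vol_\epsilon(U)\ge\epsilon^{2n_1}/C_0$ (a uniform positive volume fraction), then applies the Poincar\'e inequality to $e^{\psi_\epsilon}$ together with the $k=1$ case of the iteration inequality $\int e^{2\psi_\epsilon}|\partial\psi_\epsilon|_\epsilon^2\omega_\epsilon^n\le C\int e^{\psi_\epsilon}\omega_\epsilon^n$, and uses a Cauchy--Schwarz split over $U$ and $M\setminus U$ to obtain $\|e^{\psi_\epsilon}\|_{L^1(\omega_\epsilon)}\le C\epsilon^{2n_1}/A$. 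This chain -- infimum estimate first, then the measure lower bound on $U$, then Poincar\'e -- is the missing ingredient, and the smallness of $A_0$ plays no role in it (it only appears later, in the continuity-method and higher-order estimates). Also a small algebraic slip: with $\omega=\omega_1\oplus\omega_2$, the rescaled normalization is $\|e^{-\psi_\epsilon}\|_{L^1(\omega)}=A$, not $A/\binom{n}{n_1}$; this is harmless but worth fixing.
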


We need the following the Sobolev inequality and the Poincar\'e inequality:
\begin{lemma}
	For any $u\in C^{\infty}(M)$, there exists a uniform constant $C$ such that
\begin{equation}\label{Sobolev inequality}
	\|u\|_{L^{\frac{2np}{2n-p}}(\omega_{\epsilon})}\leq C\epsilon^{-\frac{n_{1}}{n}}(\|u\|_{L^{p}(\omega_{\epsilon})}+\|\partial u\|_{L^{p}(\omega_{\epsilon})}),
\end{equation}
and
\begin{equation}\label{poincare}
\int_{M}\big|u-\frac{1}{\text{Vol}_{\epsilon}(M)}\int_{M}u\omega_{\epsilon}^{n}\big|^{2}\omega_{\epsilon}^{n}\leq C\int_{M}|\partial u|_{\epsilon}^{2}\omega^n_{\epsilon}
\end{equation}
where 	$\|u\|_{L^{\frac{2np}{2n-p}}(\omega_{\epsilon})}=(\int_{M}|u|^{\frac{2np}{2n-p}}\omega_{\epsilon}^{n})^{\frac{2n-p}{2np}}$ and $\text{Vol}_{\epsilon}(M)=\int_{M}\omega_{\epsilon}^{n}$. Here $|\partial u|_{\epsilon}$ is the norm of  gradient of $u$ with respect to $\omega_{\epsilon}$ and $\|\partial u\|_{L^{p}(\omega_{\epsilon})}=(\int_{M}|\partial u|_{\epsilon}^{p}\omega^n_{\epsilon})^{\frac{1}{p}}$ .
\end{lemma}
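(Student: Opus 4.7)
The plan is to reduce both inequalities to the corresponding inequalities for the fixed background metric $\omega=\omega_1\oplus\omega_2$, then carefully track how $\epsilon$ enters through the scaling of volumes and gradients. The first observation is purely algebraic: since $\omega_1^a\wedge\omega_2^b=0$ unless $a=n_1$ and $b=n_2$, expanding $(\epsilon^2\omega_1+\omega_2)^n$ gives $\omega_\epsilon^n=\epsilon^{2n_1}\omega^n$. Consequently $\|u\|_{L^p(\omega_\epsilon)}=\epsilon^{2n_1/p}\|u\|_{L^p(\omega)}$. For the gradient, the block form $g_\epsilon^{-1}=\epsilon^{-2}g_1^{-1}\oplus g_2^{-1}$ yields
\begin{equation*}
|\partial u|_\epsilon^{\,2}=\epsilon^{-2}|\partial_1u|^2_{g_1}+|\partial_2u|^2_{g_2},
\end{equation*}
so for $\epsilon\leq 1$ we have $|\partial u|_\omega\leq|\partial u|_\epsilon$, and in particular $\|\partial u\|_{L^p(\omega)}\leq\epsilon^{-2n_1/p}\|\partial u\|_{L^p(\omega_\epsilon)}$.

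For the Sobolev inequality I would apply the standard Sobolev inequality with the fixed smooth metric $\omega$ on the compact manifold $M=M_1\times M_2$, which gives $\|u\|_{L^{2np/(2n-p)}(\omega)}\leq C(\|u\|_{L^p(\omega)}+\|\partial u\|_{L^p(\omega)})$, and then convert both sides with the relations above. The exponent on the left is $2n_1\cdot(2n-p)/(2np)$, the penalty on the right is $\epsilon^{-2n_1/p}$, and combining gives the net exponent
\begin{equation*}
\frac{2n_1(2n-p)}{2np}-\frac{2n_1}{p}=-\frac{n_1}{n},
\end{equation*}
which matches the stated power $\epsilon^{-n_1/n}$. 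The (possibly irrelevant) case $\epsilon\geq 1$ is even easier. No obstacle here; it is just bookkeeping.

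For the Poincaré inequality the idea is to decompose $u=u_0(x_2)+u_\perp(x_1,x_2)$, where $u_0(x_2):=\frac{1}{\vol(M_1,\omega_1)}\int_{M_1}u\,\omega_1^{n_1}$ is the fiber average, so that $\int_{M_1}u_\perp(\cdot,x_2)\,\omega_1^{n_1}=0$. Applying the ordinary Poincaré inequality on each $M_1$-fiber gives $\int_{M_1}|u_\perp|^2\omega_1^{n_1}\leq C\int_{M_1}|\partial_1u|^2\omega_1^{n_1}$; integrating over $M_2$ and using $|\partial_1u|^2=\epsilon^2\cdot\epsilon^{-2}|\partial_1u|^2\leq\epsilon^2|\partial u|_\epsilon^{\,2}$ yields $\int_M|u_\perp|^2\omega_\epsilon^n\leq C\epsilon^2\int_M|\partial u|_\epsilon^{\,2}\omega_\epsilon^n$. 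A direct computation shows $\bar u_\epsilon=\frac{1}{\vol(M_2,\omega_2)}\int_{M_2}u_0\,\omega_2^{n_2}$, so for the remaining $u_0$-piece I would apply the Poincaré inequality on the fixed manifold $(M_2,\omega_2)$, and then use Jensen on the fiber average to pass from $|\partial_2u_0|^2$ to $\int_{M_1}|\partial_2u|^2\omega_1^{n_1}/\vol(M_1)$, where finally $|\partial_2u|^2\leq|\partial u|_\epsilon^{\,2}$ controls the right-hand side in terms of $\omega_\epsilon$.

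The only place requiring some care is verifying that the Poincaré constant does not blow up as $\epsilon\to 0$; the fiber decomposition above shows this cleanly, and in fact the $u_\perp$ component contributes a gain of $\epsilon^2$ rather than a loss. Summing the two estimates yields $\int_M|u-\bar u_\epsilon|^2\omega_\epsilon^n\leq C\int_M|\partial u|_\epsilon^{\,2}\omega_\epsilon^n$ with a constant depending only on the Poincaré constants of $(M_1,\omega_1)$ and $(M_2,\omega_2)$, hence uniform in $\epsilon$.
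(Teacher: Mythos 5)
Your proposal is correct, and your Sobolev half is essentially the paper's own argument: both use $\omega_\epsilon^n=\epsilon^{2n_1}\omega^n$ to rescale the $L^p$ norms, the comparison $|\partial u|_\omega\le|\partial u|_\epsilon$ (for $\epsilon\le 1$) for the gradient term, and the fixed Sobolev inequality of $(M,\omega)$, with the same exponent bookkeeping $\frac{n_1(2n-p)}{np}-\frac{2n_1}{p}=-\frac{n_1}{n}$. For the Poincar\'e inequality, however, you take a genuinely different route. The paper exploits that $\omega_\epsilon^n$ is the constant multiple $\epsilon^{2n_1}$ of $\omega^n$, so the $\omega_\epsilon$-average of $u$ coincides with the $\omega$-average and the variance rescales exactly by $\epsilon^{2n_1}$; one then applies the single global Poincar\'e inequality of the fixed product metric $\omega$ on $M$ and absorbs the factor via $\epsilon^{2n_1}|\partial u|_\omega^2\,\omega^n\le|\partial u|_\epsilon^2\,\omega_\epsilon^n$ — three lines, no use of the fibration beyond the volume identity. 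You instead perform the adiabatic fiber decomposition $u=u_0+u_\perp$, apply Poincar\'e fiberwise on $(M_1,\omega_1)$ and on the base $(M_2,\omega_2)$, and use Jensen for the averaged gradient; this is longer and needs the small extra remarks you supply (identification of $\bar u_\epsilon$ with the base average of $u_0$, and either orthogonality of the two pieces or a triangle inequality), but it is sound and buys more than the statement asks for: the fiber component $u_\perp$ is controlled with a gain of $\epsilon^2$, which is the kind of refined information typically useful in adiabatic limits (compare Proposition 3.1 of the paper, which recovers smallness of $\tilde\vp_\epsilon-\underline{\tilde\vp}_\epsilon$ by a different, Monge--Amp\`ere-based argument). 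Either way the constant depends only on the fixed metrics, hence is uniform in $\epsilon$, as required.
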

\begin{proof}
By the Sobolev inequality with respect to $\omega$, 	
\begin{equation*}
\begin{split}
\|u\|_{L^{\frac{2np}{2n-p}}(\omega_{\epsilon})}\leq& \epsilon^{\frac{n_{1}(2n-p)}{np}}\|u\|_{L^{\frac{2np}{2n-p}}(\omega)}\\
                                               \leq &C\epsilon^{\frac{n_{1}(2n-p)}{np}}(\|u\|_{L^{p}(\omega)}+\|\partial u\|_{L^{p}(\omega)})\\
                                               \leq &C\epsilon^{\frac{n_{1}(2n-p)}{np}-\frac{2n_{1}}{p}}(\|u\|_{L^{p}(\omega_{\epsilon})}+\|\partial u\|_{L^{p}(\omega_{\epsilon})})  \\
                                               =&C\epsilon^{-\frac{n_{1}}{n}}(\|u\|_{L^{p}(\omega_{\epsilon})}+\|\partial u\|_{L^{p}(\omega_{\epsilon})}).
\end{split}
\end{equation*}
Denote $V=\int_{M}\omega^{n}$.
We have, by the Poincar\'e inequality with respect to $\omega$, 
\begin{equation*}
\begin{split}
\int_{M}|u-\frac{1}{\text{Vol}_{\epsilon}(M)}\int_{M}u\omega_{\epsilon}^{n}|^{2}\omega_{\epsilon}^{n}&=\epsilon^{2n_{1}}\left(\int_{M}u^{2}\omega^{n}-\frac{1}{V}(\int_{M}u\omega^{n})^{2}\right)\\
&=\epsilon^{2n_{1}}\int_{M}|u-\frac{1}{V}\int_{M}u\omega^{n}|^{2}\omega^{n}\\
&\leq C\epsilon^{2n_{1}}\int_{M}|\partial u|^{2}_{\omega}\omega^{n}\\
&\leq C\int_{M}|\partial u|^{2}_{\epsilon}\omega_{\epsilon}^{n},
\end{split}
\end{equation*}
where $|\partial u|^{2}_{\omega}$ is the norm of gradient of $u$ with respect to $\omega.$
\end{proof}

Now, we complete the proof of the Proposition \ref{C0 estimate}. We will use the argument in \cite[Proposition 2.1]{CHZ19}.
\begin{proof}[Proof of Proposition \ref{C0 estimate}]
	Multiplying the equation \eqref{Fu-Yau equation 1} by $\epsilon^{t}$, we have
	
	\begin{equation}\label{Fu-Yau equation 2}
	\begin{split}
	\ddbar(e^{\tilde{\vp}_{\epsilon}}\omega_{\epsilon}-\tilde{\alpha} e^{-\tilde{\vp}_{\epsilon}}\tilde{\rho}_{\epsilon}) \wedge\omega_{\epsilon}^{n-2}
	&+n\tilde{\alpha}(\ddbar\tilde{\vp}_{\epsilon})^{2}\wedge\omega_{\epsilon}^{n-2}\\
	&+\tilde{\mu}_{\epsilon}\omega_{\epsilon}^{n}=0,
	\end{split}
	\end{equation}
where $\tilde{\alpha}=\epsilon^{t}\alpha$, $\tilde{\vp}_{\epsilon}=\vp_{\epsilon}+t\ln\epsilon$, $\tilde{\rho}_{\epsilon}=\epsilon^{t}\rho_{\epsilon}$ and $\tilde{\mu}_{\epsilon}=\epsilon^{t}\mu_{\epsilon}$. By \eqref{uniform function},
\begin{equation}\label{modified function} 
\|\tilde{\rho}_{\epsilon}\|_{C^{2}(\omega_{\epsilon})}+\|\tilde{\mu}_{\epsilon}\|_{C^{2}(\omega_{\epsilon})}\leq 2K,\ \  \tilde{\alpha}\rightarrow 0.
\end{equation}
Then we have, by \eqref{normalized},
	\begin{equation}\label{Normalization condition 1}
\|e^{-\tilde{\vp}_{\epsilon}}\|_{L^{1}(\omega_{\epsilon})} = A\epsilon^{2n_{1}}.
\end{equation}
 First, we estimate  the infimum. 
By Stokes' formula,  we derive
\begin{equation}\label{Infimum estimate equation 5}
\begin{split}
&-2k\int_{M}e^{-k\tilde{\vp}_{\epsilon}}(e^{\tilde{\vp}_{\epsilon}}\omega_{\epsilon}+\tilde{\alpha} e^{-\tilde{\vp}_{\epsilon}}\tilde{\rho}_{\epsilon})\wedge\sqrt{-1}\de\tilde{\vp}_{\epsilon}\wedge\dbar\tilde{\vp}_{\epsilon}\wedge\omega_{\epsilon}^{n-2} \\
=&-2k\int_{M}e^{-k\tilde{\vp}_{\epsilon}}\sqrt{-1}\de\tilde{\vp}_{\epsilon}\wedge\dbar(e^{\tilde{\vp}_{\epsilon}}\omega_{\epsilon}-\tilde{\alpha} e^{-\tilde{\vp}_{\epsilon}}\tilde{\rho}_{\epsilon})\wedge\omega_{\epsilon}^{n-2}\\
&-2\ti{\alpha} k\int_{M}e^{-(k+1)\tilde{\vp}_{\epsilon}}\sqrt{-1}\de\tilde{\vp}_{\epsilon} \wedge\dbar\tilde{\rho}_{\epsilon}\wedge\omega_{\epsilon}^{n-2}\\
=&-2\int_{M}e^{-k\tilde{\vp}_{\epsilon}}\ddbar(e^{\tilde{\vp}_{\epsilon}}\omega_{\epsilon}-\ti{\alpha} e^{-\tilde{\vp}_{\epsilon}}\tilde{\rho}_{\epsilon})\wedge\omega_{\epsilon}^{n-2}\\
&-2\ti{\alpha} k\int_{M}e^{-(k+1)\tilde{\vp}_{\epsilon}}\sqrt{-1}\de\tilde{\vp}_{\epsilon} \wedge\dbar\tilde{\rho}_{\epsilon}\wedge\omega_{\epsilon}^{n-2}.\\
\end{split}
\end{equation}
Now we deal with the first term on the   right hand side  in \eqref{Infimum estimate equation 5}.

By (\ref{Elliptic condition}), it follows for $k\geq 2$,
\begin{equation*}
k\int_{M}e^{-k\tilde{\vp}_{\epsilon}}\sqrt{-1}\de\tilde{\vp}_{\epsilon}\wedge\dbar\tilde{\vp}_{\epsilon}\wedge\ti{\omega}_{\epsilon}\wedge\omega_{\epsilon}^{n-2} \geq 0.
\end{equation*}
Using \eqref{Fu-Yau equation 2}, \eqref{Elliptic condition} and integration by parts,   we have
\begin{equation}\label{Infimum estimate equation 2}
\begin{split}
&-2\int_{M}e^{-k\tilde{\vp}_{\epsilon}}\ddbar(e^{\tilde{\vp}_{\epsilon}}\omega_{\epsilon}-\tilde{\alpha} e^{-\tilde{\vp}_{\epsilon}}\tilde{\rho}_{\epsilon})\wedge\omega_{\epsilon}^{n-2}\\
=&2n\tilde{\alpha} \int_{M}e^{-k\tilde{\vp}_{\epsilon}}\ddbar\tilde{\vp}_{\epsilon}\wedge\ddbar\tilde{\vp}_{\epsilon}\wedge\omega_{\epsilon}^{n-2}+2\int_{M}e^{-k\tilde{\vp}_{\epsilon}}\ti{\mu}_{\epsilon}\frac{\omega_{\epsilon}^{n}}{n!}\\
=&-2n\tilde{\alpha} \int_{M}\sqrt{-1}\de e^{-k\tilde{\vp}_{\epsilon}}\wedge\dbar\tilde{\vp}_{\epsilon}\wedge\ddbar\tilde{\vp}_{\epsilon}\wedge\omega_{\epsilon}^{n-2}+2\int_{M}e^{-k\tilde{\vp}_{\epsilon}}\ti{\mu}_{\epsilon}\frac{\omega_{\epsilon}^{n}}{n!}\\
=&2n\tilde{\alpha} k\int_{M}e^{-k\tilde{\vp}_{\epsilon}}\sqrt{-1}\de\tilde{\vp}_{\epsilon}\wedge\dbar\tilde{\vp}_{\epsilon}\wedge\ddbar\tilde{\vp}_{\epsilon}\wedge\omega_{\epsilon}^{n-2}+2\int_{M}e^{-k\tilde{\vp}_{\epsilon}}\ti{\mu}_{\epsilon}\frac{\omega_{\epsilon}^{n}}{n!}\\
\geq&-k\int_{M}e^{-k\tilde{\vp}_{\epsilon}}(e^{\tilde{\vp}_{\epsilon}}\omega_{\epsilon}+\tilde{\alpha}
 e^{-\tilde{\vp}_{\epsilon}}\tilde{\rho}_{\epsilon})\wedge\sqrt{-1}\de\tilde{\vp}_{\epsilon}\wedge\dbar\tilde{\vp}_{\epsilon}\wedge\omega_{\epsilon}^{n-2}+2\int_{M}e^{-k\tilde{\vp}_{\epsilon}}\ti{\mu}_{\epsilon}\frac{\omega_{\epsilon}^{n}}{n!} .\\
\end{split}
\end{equation}

Substituting (\ref{Infimum estimate equation 2}) into (\ref{Infimum estimate equation 5}),  we obtain
\begin{equation}\label{Infimum estimate equation 3}
\begin{split}
& k\int_{M}e^{-k\tilde{\vp}_{\epsilon}}(e^{\tilde{\vp}_{\epsilon}}\omega_{\epsilon}+\ti{\alpha} e^{-\tilde{\vp}_{\epsilon}}\tilde{\rho}_{\epsilon})\wedge\sqrt{-1}\de\tilde{\vp}_{\epsilon}\wedge\dbar\tilde{\vp}_{\epsilon}\wedge\omega_{\epsilon}^{n-2} \\
\leq {} & 2\ti{\alpha} k\int_{M}e^{-(k+1)\tilde{\vp}_{\epsilon}}\sqrt{-1}\de\tilde{\vp}_{\epsilon} \wedge\dbar\tilde{\rho}_{\epsilon}\wedge\omega_{\epsilon}^{n-2}-2\int_{M}e^{-k\tilde{\vp}_{\epsilon}}\tilde{\mu}_{\epsilon}\frac{\omega_{\epsilon}^{n}}{n!}.
\end{split}
\end{equation}
From the definition of  $\delta_{0}$ and  the condition $e^{-\tilde{\vp}_{\epsilon}}\leq\delta_{0}$, we get
\begin{equation}\label{Infimum estimate equation 4}
\omega_{\epsilon}+\ti{\alpha} e^{-2\ti{\vp}_{\epsilon}}\ti{\rho}_{\epsilon}\geq\frac{1}{2}\omega_{\epsilon}.
\end{equation}
By \eqref{modified function}, \eqref{Infimum estimate equation 3}, \eqref{Infimum estimate equation 4} and the Cauchy-Schwarz inequality, we conclude
\begin{equation*}
\begin{split}
k\int_{M}e^{-(k-1)\tilde{\vp}_{\epsilon}}|\de\tilde{\vp}_{\epsilon}|_{\epsilon}^{2}\omega_{\epsilon}^{n}
\leq {} & Ck\int_{M}\left(e^{-(k+1)\tilde{\vp}_{\epsilon}}|\de\tilde{\vp}_{\epsilon}|_{\epsilon}+e^{-k\tilde{\vp}_{\epsilon}}\right)\omega_{\epsilon}^{n} \\
\leq {} & \frac{k}{2}\int_{M}e^{-(k-1)\tilde{\vp}_{\epsilon}}|\de\tilde{\vp}_{\epsilon}|_{\epsilon}^{2}\omega_{\epsilon}^{n}\\
&+Ck\int_{M}\left(e^{-(k+3)\tilde{\vp}_{\epsilon}}+e^{-k\tilde{\vp}_{\epsilon}}\right)\omega_{\epsilon}^{n}.
\end{split}
\end{equation*}
Note $e^{-\tilde{\vp}_{\epsilon}}\leq\delta_{0}$. It follows
\begin{equation*}
\frac{k}{2}\int_{M}e^{-(k-1)\tilde{\vp}_{\epsilon}}|\de\tilde{\vp}_{\epsilon}|_{\epsilon}^{2}\omega_{\epsilon}^{n}
\leq Ck(\delta_{0}^{4}+\delta_{0})\int_{M}e^{-(k-1)\tilde{\vp}_{\epsilon}}\omega_{\epsilon}^{n},
\end{equation*}
which implies
\begin{equation*}
\int_{M}|\de e^{-\frac{(k-1)\tilde{\vp}_{\epsilon}}{2}}|_{\epsilon}^{2}\omega_{\epsilon}^{n} \leq Ck^{2}\int_{M}e^{-(k-1)\tilde{\vp}_{\epsilon}}\omega_{\epsilon}^{n}.
\end{equation*}
Replacing $k-1$ by $k$, by \eqref{Sobolev inequality}, for $k\geq 1$, 

\[\|e^{-\tilde{\vp}_{\epsilon}}\|_{L^{\frac{kn}{n-1}}(\omega_{\epsilon})}\leq C^{\frac{1}{k}}\epsilon^{-2n_{1}\frac{1}{kn}}k^{2\frac{1}{k}}\|e^{-\tilde{\vp}_{\epsilon}}\|_{L^{k}(\omega_{\epsilon})}.\]
Using the Morse iteration and \eqref{Normalization condition 1}, we obtain 
\begin{equation}\label{infimum estimate}
\|e^{-\tilde{\vp}_{\epsilon}}\|_{L^{\infty}}\leq C \epsilon^{-2n_{1}}\|e^{-\tilde{\vp}_{\epsilon}}\|_{L^{1}(\omega_{\epsilon})}\leq CA.
\end{equation}

Now we estimate  the supremum.
By the similar calculation of (\ref{Infimum estimate equation 5})-(\ref{Infimum estimate equation 3}), for $k\geq1$, we have
\begin{equation*}
\begin{split}
& k\int_{M}e^{k\tilde{\vp}_{\epsilon}}(e^{\tilde{\vp}_{\epsilon}}\omega_{\epsilon}+\tilde{\alpha} e^{-\tilde{\vp}_{\epsilon}}\tilde{\rho}_{\epsilon})\wedge\sqrt{-1}\de\tilde{\vp}_{\epsilon}\wedge\dbar\tilde{\vp}_{\epsilon}\wedge\omega_{\epsilon}^{n-2} \\
\leq {} & 2\tilde{\alpha} k\int_{M}e^{(k-1)\tilde{\vp}_{\epsilon}}\sqrt{-1}\de\tilde{\vp}_{\epsilon} \wedge\dbar\tilde{\rho}_{\epsilon}\wedge\omega_{\epsilon}^{n-2}+2\int_{M}e^{k\tilde{\vp}_{\epsilon}}\tilde{\mu}_{\epsilon}\frac{\omega_{\epsilon}^{n}}{n!}.
\end{split}
\end{equation*}
Combining this with (\ref{Infimum estimate equation 4}), we obtain
\begin{equation*}
\int_{M}e^{(k+1)\tilde{\vp}_{\epsilon}}|\de\tilde{\vp}_{\epsilon}|_{g_{\epsilon}}^{2}\omega_{\epsilon}^{n} \leq C\int_{M}\left(e^{(k-1)\tilde{\vp}_{\epsilon}}|\de\tilde{\vp}_{\epsilon}|_{g_{\epsilon}}+e^{k\tilde{\vp}_{\epsilon}}\right)\omega_{\epsilon}^{n}.
\end{equation*}
Then, by $e^{-\tilde{\vp}_{\epsilon}}\leq\delta_{0}$ and the Cauchy-Schwarz inequality, 
\begin{equation}\label{Supremum estimate equation 2}
\int_{M}e^{(k+1)\tilde{\vp}_{\epsilon}}|\de\tilde{\vp}_{\epsilon}|_{g_{\epsilon}}^{2}\omega_{\epsilon}^{n} \leq C\int_{M}e^{k\tilde{\vp}_{\epsilon}}\omega_{\epsilon}^{n}.
\end{equation}
We use \eqref{infimum estimate} to get
\begin{equation*}
\int_{M}e^{k\tilde{\vp}_{\epsilon}}|\de\tilde{\vp}_{\epsilon}|_{g_{\epsilon}}^{2}\omega_{\epsilon}^{n} \leq C\int_{M}e^{k\tilde{\vp}_{\epsilon}}\omega_{\epsilon}^{n}.
\end{equation*}
By \eqref{Sobolev inequality}, it follows $\|e^{\tilde{\vp}_{\epsilon}}\|_{L^{\frac{kn}{n-1}}(\omega_{\epsilon})}\leq C\epsilon^{-\frac{2n_{1}}{kn}}\|e^{\tilde{\vp}_{\epsilon}}\|_{L^{k}(\omega_{\epsilon})}.$
By the Morse iteration, 
\begin{equation}\label{Supremum estimate equation 1}
\|e^{\tilde{\vp}_{\epsilon}}\|_{L^{\infty}}\leq C\epsilon^{-2n_{1}}\|e^{\tilde{\vp}_{\epsilon}}\|_{L^{1}(\omega_{\epsilon})}.
\end{equation}
Now it suffices to prove the $L^{1}$ estimate.

Without loss of generality, we assume that $\vol(M,\omega_{\epsilon})=\epsilon^{2n_{1}}$.
We define a  set by
\begin{equation*}
U = \{x\in M~|~e^{-\tilde{\vp}_{\epsilon}(x)}\geq\frac{A}{2}\}.
\end{equation*}
Then by (\ref{infimum estimate}) and  (\ref{Normalization condition 1}),   we have
\begin{equation*}
\begin{split}
\epsilon^{2n_{1}}A =    {} & \int_{U}e^{-\tilde{\vp}_{\epsilon}}\omega_{\epsilon}^{n}+\int_{M\setminus U}e^{-\tilde{\vp}_{\epsilon}}\omega_{\epsilon}^{n} \\
\leq {} & e^{-\inf_{M}\tilde{\vp}_{\epsilon}}\vol_{\epsilon}(U)+\frac{A}{2}(\epsilon^{2n_{1}}-\vol_{\epsilon}(U)) \\
\leq {} & \big(C-\frac{1}{2}\big)A\vol_{\epsilon}(U)+\epsilon^{2n_{1}}\frac{A}{2},
\end{split}
\end{equation*}
where $\vol_{\epsilon}$ is the volume of $U$ with respect to $\omega_{\epsilon}.$ 
It implies
\begin{equation}\label{Supremum estimate equation 3}
\vol_{\epsilon}(U) \geq \frac{\epsilon^{2n_{1}}}{C_{0}}.
\end{equation}

By  the Poincar\'{e} inequality and (\ref{Supremum estimate equation 2}) (taking $k=1$), we have
\begin{equation}
\int_{M}e^{2\vp}\omega_{\epsilon}^{n}-\frac{1}{\epsilon^{2n_{1}}}\big(\int_{M}e^{\vp}\omega_{\epsilon}^{n}\big)^{2}
\leq C\int_{M}|\partial e^{\vp}|_{\epsilon}^{2}\omega_{\epsilon}^{n}
\leq C\int_{M}e^{\tilde{\vp}_{\epsilon}}\omega_{\epsilon}^{n}.\notag
\end{equation}
By (\ref{Supremum estimate equation 3}) and the Cauchy-Schwarz inequality, we obtain
\begin{equation*}
\begin{split}
\big(\int_{M}e^{\tilde{\vp}_{\epsilon}}\omega_{\epsilon}^{n}\big)^{2}
& \leq (1+C_{0})\big(\int_{U}e^{\tilde{\vp}_{\epsilon}}\omega_{\epsilon}^{n}\big)^{2}
+\big(1+\frac{1}{C_{0}}\big)\big(\int_{M\setminus U}e^{\tilde{\vp}_{\epsilon}}\omega_{\epsilon}^{n}\big)^{2}\\
& \leq \frac{4(1+C_{0})}{A^{2}}(\vol_{\epsilon}(U))^{2}+\big(1+\frac{1}{C_{0}}\big)(\epsilon^{2n_{1}}-\vol_{\epsilon}(U))\int_{M}e^{2\tilde{\vp}_{\epsilon}}\omega_{\epsilon}^{n}\\
& \leq \frac{4(1+C_{0})}{A^{2}}\epsilon^{4n_{1}}+\big(1-\frac{1}{C_{0}^{2}}\big)\Big(\big(\int_{M}e^{\tilde{\vp}_{\epsilon}}\omega_{\epsilon}^{n}\big)^{2}
+C\epsilon^{2n_{1}}\int_{M}e^{\tilde{\vp}_{\epsilon}}\omega_{\epsilon}^{n}\Big).
\end{split}
\end{equation*}
It follows
\begin{equation}\label{l1-estimate}
 \|e^{\tilde{\vp}_{\epsilon}}\|_{L^{1}}\leq\frac{C\epsilon^{2n_1}}{A}.
\end{equation}


By \eqref{l1-estimate} and \eqref{Supremum estimate equation 1},  we see that $\|e^{\tilde{\vp}_{\epsilon}}\|_{L^{\infty}}\leq C\epsilon^{-2n_{1}}\|e^{\tilde{\vp}_{\epsilon}}\|_{L^{1}}\leq\frac{C}{A}.$
Note $e^{\vp_{\epsilon}}=\epsilon^{-t}e^{\tilde{\vp}_{\epsilon}}$. We complete the proof.
\end{proof}

\subsection{$C^{1}$ and $C^{2}$ estimates}
In this subsection, we recall the $C^{1}$ and $C^{2}$ estimates in \cite{CHZ19}.  In fact, their estimates are uniform for $\alpha$ when $|\alpha|$ is bounded.  However there are some $\frac{1}{\alpha}$ in the proof.  In fact, the $\frac{1}{\alpha}$ can be cancelled. In the following, we verify this in general manifolds. 
Let $(M, \omega)$ be an $n$-dimension K\"{a}hler manifold. In this subsection, we consider the following equation
\begin{equation}\label{Fu-Yau equation A}
\begin{split}
\ddbar(e^{\vp}\omega & -\alpha e^{-\vp}\rho) \wedge\omega^{n-2} \\
& +n\alpha\ddbar\vp\wedge\ddbar\vp\wedge\omega^{n-2}+\mu\frac{\omega^{n}}{n!}=0,
\end{split}
\end{equation}
where $\rho$ is a real-valued smooth $(1,1)$-form and $\mu$ is a smooth function.

\begin{proposition}\label{Gradient estimate}
	Let $\vp$ be a solution of \eqref{Fu-Yau equation A} satisfying 
		\begin{equation}\label{Elliptic condition A}
	\tilde{\omega} = e^{\vp_{\epsilon}}\omega+\alpha e^{-\vp}\rho+2n\alpha\ddbar\vp \in \Gamma_{2}(M).
	\end{equation}
	Assume that
	\begin{equation*}
	\frac{A}{M_0}\leq e^{-\vp}\leq M_0A  \text{~and~} |\de\dbar\vp|_{g}\leq D,
	\end{equation*}
	where $M_0$ is a uniform constant.  Then there exists a uniform constant $C_{0}$ such that if
	\begin{equation*}
	A \leq A_{D} := \frac{1}{C_{0}M_{0}D},
	\end{equation*}
	then
	\begin{equation*}
	|\de\vp|_{g}^{2} \leq C_{1},
	\end{equation*}
	where $C_{1}$ is a uniform constant for $\alpha$.
\end{proposition}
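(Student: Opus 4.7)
The plan is to follow the maximum principle argument of Chu--Huang--Zhu, keeping careful track of the $\alpha$-dependence so that no factor of $1/\alpha$ survives. I would consider a test function of the form
\begin{equation*}
Q = |\de\vp|_{g}^{2}\, e^{\lambda \vp}
\end{equation*}
(or equivalently $\log|\de\vp|^{2}+\lambda\vp$) for a constant $\lambda$ to be chosen depending only on $(M,\omega)$ and $K$. At a point $x_{0}\in M$ where $Q$ attains its maximum, the natural elliptic operator is the linearization of \eqref{Fu-Yau equation A},
\begin{equation*}
\mathcal{L}(u) \,:=\, n(n-1)\,\ddbar u\wedge \tilde{\omega}\wedge \omega^{n-2}\big/\omega^{n},
\end{equation*}
which is uniformly elliptic by the hypothesis $\tilde{\omega}\in \Gamma_{2}(M)$, and its principal part is bounded below in terms of $e^{\vp}\gtrsim 1/(M_{0}A)$ coming from the $e^{\vp}\omega$ piece of $\tilde{\omega}$. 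The maximum principle gives $\mathcal{L}(Q)(x_{0})\leq 0$.

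To compute $\mathcal{L}(|\de\vp|^{2})$ I would differentiate \eqref{Fu-Yau equation A} once by $\de$ and once by $\dbar$, contract with $\vp_{\bar k}$ and $\vp_{k}$, and reorganise using commutators of covariant derivatives on $(M,\omega)$. This produces: (i) a positive term of order $e^{\vp}|\de\vp|^{4}$ coming from $\ddbar(e^{\vp}\omega)\wedge\omega^{n-2}$; (ii) Hessian-squared terms controlled by $D^{2}$; and (iii) error terms proportional to $\alpha e^{-\vp}|\nabla\rho||\de\vp|$ and $|\nabla\mu||\de\vp|$, each bounded by the $C^{2}$-hypotheses on $\rho,\mu$ together with $|\alpha|\leq K$. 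The critical point conditions $\de Q(x_{0})=\dbar Q(x_{0})=0$ then give the identity $\nabla|\de\vp|^{2}=-\lambda|\de\vp|^{2}\nabla\vp$, which is used to eliminate third-order derivatives of $\vp$ from the expression for $\mathcal{L}(Q)$.

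The main obstacle, and the only departure from \cite{CHZ19}, is to dispose of the third-order terms that arise when $\ddbar$ falls on $\ddbar\vp$ inside $n\alpha(\ddbar\vp)^{2}\wedge\omega^{n-2}$: in \cite{CHZ19} these are absorbed after formally dividing by $\alpha$, which is harmless when $\alpha$ is fixed but prevents uniformity as $\alpha\to 0$. The trick is to use the algebraic identity
\begin{equation*}
2n\alpha\,\ddbar\vp \;=\; \tilde{\omega}-e^{\vp}\omega-\alpha e^{-\vp}\rho
\end{equation*}
so that any occurrence of $\alpha\,\ddbar\vp$ inside an error term is rewritten as a $\tilde{\omega}$-contraction, which is absorbed by the ellipticity of $\mathcal{L}$, plus zero-order expressions bounded by $e^{\vp}$ and $|\alpha|e^{-\vp}\|\rho\|_{C^{0}}$. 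This reorganisation keeps $\alpha$ inside $\tilde{\omega}$ and never inverts it.

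Combining these ingredients with Cauchy--Schwarz, the inequality $\mathcal{L}(Q)(x_{0})\leq 0$ reduces to
\begin{equation*}
c_{0}\,e^{\vp}\,|\de\vp|^{4}\;\leq\; C(1+D^{2})+C\,M_{0}AD\,|\de\vp|^{2}+C|\de\vp|^{2}
\end{equation*}
at $x_{0}$, with constants $c_{0},C$ independent of $\alpha$. Using $e^{\vp}\geq 1/(M_{0}A)$ on the left, and the smallness hypothesis $A\leq 1/(C_{0}M_{0}D)$ to absorb the mixed $AD$ term, I expect to conclude $|\de\vp|^{2}(x_{0})\leq C_{1}$ with $C_{1}$ uniform in $\alpha$. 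The bound at an arbitrary point of $M$ then follows from $Q\leq Q(x_{0})$ together with the two-sided control $A/M_{0}\leq e^{-\vp}\leq M_{0}A$.
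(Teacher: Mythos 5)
Your overall strategy---maximum principle for $\log|\de\vp|_g^2+\lambda\vp$ with the second-order operator obtained by contracting against $\tilde{\omega}$---is the same one the paper uses (following \cite{CHZ19}), but your mechanism for removing $1/\alpha$ is aimed at a misidentified difficulty, and the error-term bookkeeping you sketch would not deliver the stated conclusion. On the first point: in \cite{CHZ19} the operator $F^{i\ov{j}}=\de\sigma_{2}(\hat{\omega})/\de\hat{g}_{i\ov{j}}$ is already built from $\tilde{\omega}$, so no inversion of $\alpha$ occurs in handling the third-order terms; the only $1/\alpha$'s are in the terms $-\frac{n-1}{\alpha}e^{-\vp}|\de\vp|_g^{2}f$ and $\frac{n-1}{2\alpha}e^{-\vp}\textrm{Re}\big(\sum_{k}f_{k}\vp_{\ov{k}}\big)$ appearing in \eqref{Gradient estimate equation 7}, and the paper's whole proof is the observation \eqref{alphaf} that $f=\alpha\tilde{f}$ by the very definition of $f$, so the $1/\alpha$ cancels identically and $\tilde{f}$, $\tilde{f}_k$ are estimated directly from the hypotheses on $\rho,\mu$. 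Your substitution $2n\alpha\ddbar\vp=\tilde{\omega}-e^{\vp}\omega-\alpha e^{-\vp}\rho$ is harmless but does not engage with these terms, which are where the $\alpha$-uniformity question actually lives.

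The genuine gaps are quantitative. First, your item (ii), ``Hessian-squared terms controlled by $D^{2}$,'' is not available: the hypothesis bounds only the complex Hessian $|\de\dbar\vp|_g\leq D$, whereas the computation unavoidably produces the $(2,0)$-derivatives $e_{i}e_{j}(\vp)$ (from differentiating $|\de\vp|_g^{2}$ and from the $\rho$-terms), which $D$ does not control; in the paper these squared second-derivative terms are never estimated by $D$ but are absorbed, with the small coefficient $\tfrac{1}{10}$ in \eqref{Gradient estimate equation 8 A}--\eqref{Gradient estimate equation 9}, into the positive squared-derivative terms generated by the maximum principle. Second, even granting such a bound, your final inequality carries the naked term $C(1+D^{2})$: after dividing by $e^{\vp}\geq 1/(M_{0}A)$ and using $A\leq 1/(C_{0}M_{0}D)$ this still leaves a contribution of size $D/C_{0}$, so at best you obtain $|\de\vp|_g^{2}\lesssim\sqrt{D}$, not a constant $C_{1}$ independent of $D$. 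That independence is essential---it is what Proposition \ref{first and second order} and the closedness argument in Section 2.3 rely on---and it is achieved precisely because in the correct computation every $D$-dependent term appears multiplied by $e^{-\vp}\leq M_{0}A\leq 1/(C_{0}D)$. As written, your plan does not preserve this structure, so the proof does not close without redoing the bookkeeping essentially as in \cite{CHZ19}.
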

\begin{proof}
	The  proof is similar to \cite[Proposition 3.1]{CHZ19}. It suffices to show that the \textquotedblleft$C$\textquotedblright, from \cite[(3.9)]{CHZ19} to \cite[(3.11)]{CHZ19}, do not depend on $\frac{1}{\alpha}$. For reader's convenience, we include the proof.
	We apply the maximum principle to the quantity
	\begin{equation*}
	Q = \log|\de\vp|_{g}^{2}+\frac{\vp}{B},
	\end{equation*}
	where $B>1$ is a large uniform constant to be determined later.
	
	Assume that $Q$ achieves its maximum at $x_{0}$.
	Let $\{e_{i}\}_{i=1}^{n}$ be a local unitary frame in a neighborhood of $x_{0}$ such that, at $x_{0}$, with respect to $g$
	\begin{equation}\label{tilde gij}
	\tilde{g}_{i\ol{j}}
	= \delta_{i\ol{j}}\tilde{g}_{i\ol{i}}
	= \delta_{i\ol{j}}(e^{\vp}+\alpha e^{-\vp}\rho_{i\ov{i}}+2n\alpha \vp_{i\ov{i}}).
	\end{equation}
	For convenience, we use the following notation:
	\begin{equation*}
	\hat{\omega} = e^{-\vp}\ti{\omega},
	\hat{g}_{i\ov{j}} = e^{-\vp}\ti{g}_{i\ov{j}} \text{~and~}
	F^{i\ov{j}} = \frac{\partial\sigma_{2}(\hat{\omega})}{\de\hat{g}_{i\ov{j}}}.
	\end{equation*}
	By \cite[(3.9)]{CHZ19}, 
	\begin{equation}\label{Gradient estimate equation 7}
	\begin{split}
	& \sum_{k}F^{i\ov{i}}\left(e_{k}(\vp_{i\ov{i}})\vp_{\ov{k}}+\ov{e}_{k}(\vp_{i\ov{i}})\vp_{k}\right) \\[1mm]
	\geq {} & -Ce^{-\vp}|\de\vp|_{g}^{2}-Ce^{-\vp}|\de\vp|_{g}+2|\de\vp|_{g}^{2}F^{i\ov{i}}\vp_{i\ov{i}}
	-2(n-1)\textrm{Re}\big(\sum_{k}(|\de\vp|_{g}^{2})_{k}\vp_{\ov{k}}\big) \\[1mm]
	& +2(n-1)|\de\vp|_{g}^{4}-\frac{n-1}{\alpha}e^{-\vp}|\de\vp|_{g}^{2}f
	+\frac{n-1}{2\alpha}e^{-\vp}\textrm{Re}\big(\sum_{k}f_{k}\vp_{\ov{k}}\big).
	\end{split}
	\end{equation}
	There are two terms containing $\frac{1}{\alpha}$, i.e.
	\begin{equation*}
	-\frac{n-1}{\alpha}e^{-\vp}|\de\vp|_{g}^{2}f
	+\frac{n-1}{2\alpha}e^{-\vp}\textrm{Re}(\sum_{k}f_{k}\vp_{\ov{k}}).
	\end{equation*}
	Recall the definition of $f$ \cite[(3.2)]{CHZ19}. It can be written as 
	\begin{equation}\label{alphaf}
	f=\alpha\tilde{f},
	\end{equation}
	where 
	\begin{align}\label{Definition of tilde f}
	\tilde{f}\omega^{n} &=  2\rho\wedge\omega^{n-1}+\alpha e^{-2\vp}\rho^{2}\wedge\omega^{n-2}-4n\mu\frac{\omega^{n}}{n!}\notag \\
	& +4n\alpha e^{-\vp}\sqrt{-1}\left(\de\vp\wedge\dbar\vp\wedge\rho-\de\vp\wedge\dbar\rho
	-\de\rho\wedge\dbar\vp+\de\dbar\rho\right)\wedge\omega^{n-2}.
	\end{align}
	Hence, 
	\begin{equation}
	\begin{split}
	&-\frac{n-1}{\alpha}e^{-\vp}|\de\vp|_{g}^{2}f
	+\frac{n-1}{2\alpha}e^{-\vp}\textrm{Re}(\sum_{k}f_{k}\vp_{\ov{k}})\\
	=&-(n-1)e^{-\vp}|\de\vp|_{g}^{2}\tilde{f}
	+\frac{n-1}{2}e^{-\vp}\textrm{Re}(\sum_{k}\tilde{f}_{k}\vp_{\ov{k}}).
	\end{split}
	\end{equation}
	
By (\ref{Definition of tilde f}),  a direct calculation shows that
	\begin{equation}\label{Gradient estimate equation 8 A}
	\begin{split}
	& -\frac{n-1}{\alpha}e^{-\vp}|\de\vp|_{g}^{2}f+\frac{n-1}{2\alpha}e^{-\vp}\textrm{Re}\big(\sum_{k}f_{k}\vp_{\ov{k}}\big) \\[1mm]
	=&-(n-1)e^{-\vp}|\de\vp|_{g}^{2}\tilde{f}
	+\frac{n-1}{2}e^{-\vp}\textrm{Re}(\sum_{k}\tilde{f}_{k}\vp_{\ov{k}})\\[1mm]
	\geq {} & -C\big(e^{-2\vp}|\de\vp|^{2}_{g}+e^{-2\vp}|\de\vp|_{g}\big)\sum_{i,j}(|e_{i}\ov{e}_{j}(\vp)|+|e_{i}e_{j}(\vp)|)-Ce^{-\vp}|\de\vp|_{g}^{4} \\
	& -Ce^{-\vp}|\de\vp|_{g}^{3}-Ce^{-\vp}|\de\vp|_{g}^{2}-Ce^{-\vp}|\de\vp|_{g} \\[1mm]
	\geq {} & -\frac{1}{10}\sum_{i,j}(|e_{i}\ov{e}_{j}(\vp)|^{2}+|e_{i}e_{j}(\vp)|^{2})-Ce^{-\vp}|\de\vp|_{g}^{4}-Ce^{-\vp}.
	\end{split}
	\end{equation}
	Here $C$ does not depend on $\frac{1}{\alpha}$.
	Substituting \eqref{Gradient estimate equation 8 A} into \eqref{Gradient estimate equation 7},
	we have
	\begin{equation}\label{Gradient estimate equation 9}
	\begin{split}
	& \sum_{k}F^{i\ov{i}}\left(e_{k}(\vp_{i\ov{i}})\vp_{\ov{k}}+\ov{e}_{k}(\vp_{i\ov{i}})\vp_{k}\right) \\[2mm]
	\geq {} & 2|\de\vp|_{g}^{2}F^{i\ov{i}}\vp_{i\ov{i}}+2(n-1)|\de\vp|_{g}^{4}
	-2(n-1)\textrm{Re}\left(\sum_{k}(|\de\vp|_{g}^{2})_{k}\vp_{\ov{k}}\right) \\[2mm]
	& -\frac{1}{10}\sum_{i,j}(|e_{i}\ov{e}_{j}(\vp)|^{2}+|e_{i}e_{j}(\vp)|^{2})-Ce^{-\vp}|\de\vp|_{g}^{4}-Ce^{-\vp}.
	\end{split}
	\end{equation}
	The rest of the proof is similar to the proof in \cite{CHZ19}.
	
\end{proof}

\begin{proposition}\label{Second order estimate}
	Let $\vp$ be a solution of (\ref{Fu-Yau equation A}) satisfying \eqref{Elliptic condition A} and $\frac{A}{M_0}\leq e^{-\vp}\leq M_0A$ for some uniform constant $M_0$. There exist uniform constants $D_{0}$ and $C_{0}$ such that if
	\begin{equation*}
	|\de\dbar\vp|_{g} \leq D, ~~D_{0}\leq D \text{~and~} A\leq A_{D}:=\frac{1}{C_{0}M_{0}D},
	\end{equation*}
	then
	\begin{equation*}
	|\de\dbar\vp|_{g} \leq \frac{D}{2}.
	\end{equation*}
\end{proposition}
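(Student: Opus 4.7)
The plan is to follow the Chu--Huang--Zhu $C^{2}$ estimate of \cite[Section~4]{CHZ19} line by line, and to verify, exactly as in Proposition~\ref{Gradient estimate} above, that none of the constants in the argument depends on $1/\alpha$. Once such a uniform-in-$\alpha$ version of the CHZ estimate is in hand, the proposition follows by a bootstrap: if $|\de\db\vp|_{g}\le D$ and $e^{-\vp}\le M_{0}A$, the uniform estimate gives $|\de\db\vp|_{g}\le C_{1}+C_{2}M_{0}A\,D^{k}$ for some fixed integer $k$ and uniform $C_{1},C_{2}$; then $D_{0}\ge 4C_{1}$ together with $A\le 1/(C_{0}M_{0}D)$ and $C_{0}\ge 4C_{2}D^{k-1}$ force $C_{2}M_{0}AD^{k}\le D/4$, hence $|\de\db\vp|_{g}\le D/2$.

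Concretely I would apply the maximum principle to
\[
Q \;=\; \log\lambda_{1}\;+\;\tfrac{B_{1}}{2}|\de\vp|_{g}^{2}\;+\;B_{2}\vp,
\]
where $\lambda_{1}$ is the largest eigenvalue of $\omega^{-1}\!\cdot\!(\sqrt{-1}\de\db\vp)$ and $B_{1},B_{2}$ are uniform constants to be fixed as in \cite{CHZ19}. At a maximum point I would choose a unitary frame simultaneously diagonalizing $g$ and $\sqrt{-1}\de\db\vp$, and compute $F^{i\ov{i}}Q_{i\ov{i}}\le 0$ using the linearization of \eqref{Fu-Yau equation A}. The third-order derivatives of $\vp$ that appear are to be controlled by the $\Gamma_{2}$-positivity coming from \eqref{Elliptic condition A}; the second-order derivatives produced by differentiation of $e^{\vp}\omega+\alpha e^{-\vp}\rho$ are absorbed by the $B_{1}|\de\vp|_{g}^{2}$ term; and the remaining lower-order terms are handled using Proposition~\ref{Gradient estimate} together with the $C^{0}$ bound.

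The crucial step is to isolate every term in the CHZ computation that carries an apparent factor of $1/\alpha$; by inspection these all originate from the function $f$ of \cite[(3.2)]{CHZ19}, and the identity $f=\alpha\tilde f$ displayed in \eqref{alphaf}--\eqref{Definition of tilde f} cancels every one of them, exactly as in the proof of Proposition~\ref{Gradient estimate}. The main obstacle is precisely this long, term-by-term bookkeeping: the CHZ argument was written for a fixed $\alpha$, and one must check that no constant, no Cauchy--Schwarz weight, and no denominator in the third-order error estimate conceals an additional $1/\alpha$ after the substitution $f\mapsto \alpha\tilde f$. No new geometric idea is needed beyond this cancellation.
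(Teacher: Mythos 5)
Your strategy is essentially the paper's: the proof is a rerun of the $C^{2}$ estimate of \cite[Proposition 4.1]{CHZ19}, and the only new content is the observation that every apparent $1/\alpha$ in that computation originates from $f$ and is removed by the identity $f=\alpha\tilde f$ of \eqref{alphaf}--\eqref{Definition of tilde f}, exactly as in the gradient estimate -- which is precisely what you propose. Two details of your sketch deviate from what is actually done, though neither changes the substance: the paper (following CHZ19) applies the maximum principle to $Q=|\de\dbar\vp|_{g}^{2}+B|\de\vp|_{g}^{2}$, not to a largest-eigenvalue quantity $\log\lambda_{1}+\tfrac{B_{1}}{2}|\de\vp|_{g}^{2}+B_{2}\vp$, and the smallness $e^{-\vp}|\de\dbar\vp|_{g}\le M_{0}AD\le 1/C_{0}$ is used \emph{inside} the maximum-principle computation (to force $|F^{i\ov{i}}-(n-1)|\le\tfrac{1}{100}$, so the operator is a small perturbation of the Laplacian and one gets a uniform bound, after which $D_{0}$ is taken to be twice that bound), rather than through an a posteriori inequality $|\de\dbar\vp|_{g}\le C_{1}+C_{2}M_{0}AD^{k}$; as written, your choice $C_{0}\ge 4C_{2}D^{k-1}$ would make $C_{0}$ depend on $D$ unless $k\le 2$, which is not permitted since $C_{0}$ must be uniform, so you should instead feed the smallness of $M_{0}AD$ into the estimate itself as in \cite{CHZ19}.
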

\begin{proof}
	The  proof is similar to \cite[Proposition 4.1]{CHZ19}. It suffices to show that the \textquotedblleft$C$\textquotedblright, from \cite[(4.6)]{CHZ19} to \cite[(4.7)]{CHZ19}, do not depend on $\frac{1}{\alpha}$.
	
	First, we recall the notation in \cite{CHZ19}.
	Consider the following quantity
	\begin{equation*}
	Q = |\de\dbar\vp|_{g}^{2} + B|\de\vp|_{g}^{2},
	\end{equation*}
	where $B>1$ is a uniform constant to be determined later. Assume that $Q(x_{0})=\max_{M}Q$. Choose a local $g$-unitary frame $\{e_{i}\}_{i=1}^{n}$ for $T_{\mathbb{C}}^{(1,0)}M$  in a neighborhood $x_{0}$  such that $\ti{g}_{i\ov{j}}(x_{0})$ is diagonal.  Denote
	\begin{equation*}
	\hat{\omega} = e^{-\vp}\ti{\omega},
	\hat{g}_{i\ov{j}} = e^{-\vp}\ti{g}_{i\ov{j}},
	F^{i\ov{j}} = \frac{\de{\sigma_{2}(\hat{\omega})}}{\de\hat{g}_{i\ov{j}}}
	\text{~and~}
	F^{i\ov{j},k\ov{l}} = \frac{\de^{2}{\sigma_{2}(\hat{\omega})}}{\de\hat{g}_{i\ov{j}}\de\hat{g}_{k\ov{l}}}.
	\end{equation*}
	It follows
	\begin{equation*}
	F^{i\ov{j}} = \delta_{ij}F^{i\ov{i}} = \delta_{ij}e^{-\vp}\sum_{k\neq i}\ti{g}_{k\ov{k}}
	\end{equation*}
	and
	\begin{equation*}
	F^{i\ov{j},k\ov{l}}=
	\left\{\begin{array}{ll}
	1, & \text{if $i=j$, $k=l$, $i\neq k$;}\\[1mm]
	-1, & \text{if $i=l$, $k=j$, $i\neq k$;}\\[1mm]
	0, & \text{\quad\quad~otherwise.}
	\end{array}\right.
	\end{equation*}
	By the assumption of Proposition \ref{Second order estimate},  assume that
	\begin{equation}\label{Second order estimate equation 6}
	e^{-\vp}|\de\dbar\vp|_{g}\leq \frac{1}{1000n^{3}(|\alpha|+1)B}.
	\end{equation}
	
Then
	\begin{equation}\label{Second order estimate equation 1}
	|F^{i\ov{i}}-(n-1)| \leq \frac{1}{100}
	\text{~and~} |F^{i\ov{j},k\ov{l}}|\leq 1.
	\end{equation}
	By \cite[(4.4)]{CHZ19},
	\begin{equation}\label{Second order estimate equation 2}
	F^{i\ov{j}}e_{k}\ov{e}_{l}(e^{-\vp}\ti{g}_{i\ov{j}})
	= I_{1}+I_{2}+I_{3},
	\end{equation}
	where 
	\begin{equation*}
	\begin{split}
	I_{1} = {} & -F^{i\ov{j},p\ov{q}}e_{k}(e^{-\vp}\ti{g}_{i\ov{j}})\ov{e}_{l}(e^{-\vp}\ti{g}_{p\ov{q}}), \\[1.5mm]
	I_{2} = {} & -2n(n-1)\alpha e_{k}\ov{e}_{l}(e^{-\vp}|\de\vp|_{g}^{2}), \\
	I_{3} = {} & \frac{n(n-1)}{2}e_{k}\ov{e}_{l}(e^{-2\vp}f).
	\end{split}
	\end{equation*}	
In the following,	we deal with each term in (\ref{Second order estimate equation 2}) below.  
Recall	\begin{equation}\label{ddbar formula}
\vp_{i\ol{j}}=\ddbar\vp(e_{i}, \ol{e}_{j})=e_{i}\ol{e}_{j}(\vp)-[e_{i}, \ol{e}_{j}]^{(0,1)}(\vp).
\end{equation}
 For $I_{1}$, by (\ref{Second order estimate equation 1}), Proposition \ref{Gradient estimate} and the Cauchy-Schwarz inequality, we derive
	\begin{equation*}
	\begin{split}
	|I_{1}| \leq {} & \sum_{i,j,k}\left|e_{k}(\alpha e^{-2\vp}\rho_{i\ov{j}}+2n\alpha e^{-\vp}\vp_{i\ov{j}})\right|^{2} \\
	\leq {} & 2\sum_{i,j,k}\left|e_{k}(2n\alpha e^{-\vp}\vp_{i\ov{j}})\right|^{2}
	+2\sum_{i,j,k}\left|e_{k}(\alpha e^{-2\vp}\rho_{i\ov{j}})\right|^{2} \\
	\leq {} & 8n^{2}\alpha^{2}e^{-2\vp}\sum_{i,j,k}\left|e_{k}e_{i}\ov{e}_{j}(\vp)-e_{k}[e_{i},\ov{e}_{j}]^{(0,1)}(\vp)
	-\vp_{k}\vp_{i\ov{j}}\right|^{2}+C\alpha^{2}e^{-4\vp} \\
	\leq {} & \alpha^{2}\left(16n^{2}e^{-2\vp}\sum_{i,j,k}|e_{k}e_{i}\ov{e}_{j}(\vp)|^{2}
	+Ce^{-2\vp}\sum_{i,j}\left(|e_{i}\ov{e}_{j}(\vp)|^{2}+|e_{i}{e}_{j}(\vp)|^{2}\right)+Ce^{-2\vp}\right).
	\end{split}
	\end{equation*}
 Similarly,  for $I_{2}$ and $I_{3}$, by \eqref{alphaf}, we get
	\begin{equation*}
	|I_{2}| \leq \alpha\left(Ce^{-\vp}\sum_{i,j,p}|e_{p}e_{i}\ov{e}_{j}(\vp)|
	+Ce^{-\vp}\sum_{i,j}\left(|e_{i}\ov{e}_{j}(\vp)|^{2}+|e_{i}{e}_{j}(\vp)|^{2}\right)+Ce^{-\vp}\right)
	\end{equation*}
	and
	\begin{equation*}
	\begin{split}
	|I_{3}|    = {} & \frac{n(n-1)}{2}e^{-2\vp}\left|4\vp_{k}\vp_{\ov{l}}f-2e_{k}\ov{e}_{l}(\vp)f
	-2\vp_{\ov{l}}f_{k}-2\vp_{k}f_{\ov{l}}+e_{k}\ov{e}_{l}f\right| \\
	={}& \alpha\left(\frac{n(n-1)}{2}e^{-2\vp}\left|4\vp_{k}\vp_{\ov{l}}\tilde{f}-2e_{k}\ov{e}_{l}(\vp)\tilde{f}
	-2\vp_{\ov{l}}\tilde{f}_{k}-2\vp_{k}\tilde{f}_{\ov{l}}+e_{k}\ov{e}_{l}\tilde{f}\right|\right)\\
	\leq {} & \alpha\left(Ce^{-2\vp}\sum_{i,j,p}|e_{p}e_{i}\ov{e}_{j}(\vp)|+Ce^{-2\vp}\sum_{i,j}\left(|e_{i}\ov{e}_{j}(\vp)|^{2}
	+|e_{i}{e}_{j}(\vp)|^{2}\right)+Ce^{-2\vp}\right),
	\end{split}
	\end{equation*}
	where we   used Proposition \ref{Gradient estimate} and (\ref{Definition of tilde f}).  Thus substituting these estimates into (\ref{Second order estimate equation 2}),   we conclude
	\begin{equation}\label{Second order estimate equation 7}
	\begin{split}
	&|F^{i\ov{i}}e_{k}\ov{e}_{l}(e^{-\vp}\ti{g}_{i\ov{i}})|\\
	&\leq  16n^{2}\alpha^{2}e^{-2\vp}\sum_{i,j,p}|e_{p}e_{i}\ov{e}_{j}(\vp)|^{2}+C\alpha e^{-\vp}\sum_{i,j,p}|e_{p}e_{i}\ov{e}_{j}(\vp)| \\
	& +C\alpha e^{-\vp}\sum_{i,j}(|e_{i}\ov{e}_{j}(\vp)|^{2}+|e_{i}e_{j}(\vp)|^{2})+C\alpha e^{-\vp}.
	\end{split}
	\end{equation}
	
In addition,  by the definition of $\ti{g}_{i\ov{i}}$ and (\ref{ddbar formula}),  we have
	\begin{equation*}
	\begin{split}
	F^{i\ov{i}}e_{k}\ov{e}_{l}(e^{-\vp}\ti{g}_{i\ov{i}})
	= {} & \alpha F^{i\ov{i}}e_{k}\ov{e}_{l}(e^{-2\vp}\rho_{i\ov{i}})+2n\alpha F^{i\ov{i}}e_{k}\ov{e}_{l}(e^{-\vp}\vp_{i\ov{i}}) \\
	= {} & \alpha F^{i\ov{i}}e_{k}\ov{e}_{l}(e^{-2\vp}\rho_{i\ov{i}})+2n\alpha F^{i\ov{i}}e_{k}\ov{e}_{l}(e^{-\vp}e_{i}\ov{e}_{i}(\vp)) \\
	& -2n\alpha F^{i\ov{i}}e_{k}\ov{e}_{l}(e^{-\vp}[e_{i},\ov{e}_{i}]^{(0,1)}(\vp)).
	\end{split}
	\end{equation*}
	Then by  (\ref{Second order estimate equation 1}) and Proposition \ref{Gradient estimate},  it  follows that
	\begin{equation}
	\begin{split}
	|2n\alpha e^{-\vp}F^{i\ov{i}}e_{k}\ov{e}_{l}e_{i}\ov{e}_{i}(\vp)|
	\leq {} & |F^{i\ov{i}}e_{k}\ov{e}_{l}(e^{-\vp}\ti{g}_{i\ov{i}})|+C\alpha e^{-\vp}\sum_{i,j,p}|e_{p}e_{i}\ov{e}_{j}(\vp)| \notag\\
	& +C\alpha e^{-\vp}\sum_{i,j}(|e_{i}\ov{e}_{j}(\vp)|^{2}+|e_{i}e_{j}(\vp)|^{2})+C\alpha e^{-\vp}.
	\end{split}
	\end{equation}
	Thus substituting (\ref{Second order estimate equation 7}) into the above inequality, we  derive
	\begin{equation}\label{Second order estimate equation 5}
	\begin{split}
	|F^{i\ov{i}}e_{k}\ov{e}_{l}e_{i}\ov{e}_{i}(\vp)|
	\leq {} & 8n|\alpha|e^{-\vp}\sum_{i,j,p}|e_{p}e_{i}\ov{e}_{j}(\vp)|^{2}+C\sum_{i,j,p}|e_{p}e_{i}\ov{e}_{j}(\vp)| \\
	& +C\sum_{i,j}(|e_{i}\ov{e}_{j}(\vp)|^{2}+|e_{i}e_{j}(\vp)|^{2})+C.
	\end{split}
	\end{equation}
	Here $C$ does not depend on $\frac{1}{\alpha}.$ The rest of the proof is similar to the proof of \cite[Proposition 4.1]{CHZ19}.
\end{proof}

Then we have
\begin{proposition}\label{first and second order}
	Let $\vp_{\epsilon}$ be the solution of \eqref{Fu-Yau equation 1}.
	Assume that
	\begin{equation*}
	\frac{A\epsilon^4}{M_0}\leq e^{-\vp_{\epsilon}}\leq M_0A\epsilon^{4}  \text{~and~} |\de\dbar\vp_{\epsilon}|_{g}\leq D.
	\end{equation*}
	Then there exists a uniform constant $C_{0}$ such that if
	\begin{equation*}
	A \leq A_{D} := \frac{1}{C_{0}M_{0}D},
	\end{equation*}
	then
	\begin{equation*}
	|\de\vp_{\epsilon}|_{\omega_{\epsilon}}^{2} \leq M_{1}, |\ddbar\vp_{\epsilon}|_{\omega_{\epsilon}}\leq \frac{D}{2},
	\end{equation*}
	where $M_{1}$ is a uniform constant only depending on $K$ and $(M,\omega)$.	
\end{proposition}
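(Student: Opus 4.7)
The plan is to reduce to Propositions \ref{Gradient estimate} and \ref{Second order estimate} by translating to $\tilde{\vp}_{\epsilon} := \vp_{\epsilon} + 4\ln\epsilon$, which preserves $|\partial\vp_\epsilon|_{\omega_\epsilon}$ and $|\ddbar\vp_\epsilon|_{\omega_\epsilon}$, and then applying those two general-manifold estimates with the choice of underlying Kähler manifold $(M,\omega_\epsilon)$.

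First I would verify that $\tilde{\vp}_\epsilon$ satisfies a Fu-Yau equation of the form \eqref{Fu-Yau equation A} with $\omega=\omega_\epsilon$, $\tilde\alpha=\epsilon^4\alpha$, $\tilde\rho_\epsilon=\epsilon^4\rho_\epsilon$, $\tilde\mu_\epsilon=\epsilon^4\mu_\epsilon$; by \eqref{uniform function} these satisfy uniform $C^2(\omega_\epsilon)$-bounds and $|\tilde\alpha|\leq K$. The elliptic condition \eqref{Elliptic condition} rewrites as $e^{\tilde\vp_\epsilon}\omega_\epsilon + \tilde\alpha e^{-\tilde\vp_\epsilon}\tilde\rho_\epsilon + 2n\tilde\alpha\ddbar\tilde\vp_\epsilon \in \Gamma_2(M)$, which is exactly \eqref{Elliptic condition A} with $\vp$ replaced by $\tilde\vp_\epsilon$. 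The assumption $\frac{A\epsilon^4}{M_0}\leq e^{-\vp_\epsilon}\leq M_0 A\epsilon^4$ translates to $\frac{A}{M_0}\leq e^{-\tilde\vp_\epsilon}\leq M_0 A$, and the bound $|\ddbar\tilde\vp_\epsilon|_{\omega_\epsilon}=|\ddbar\vp_\epsilon|_{\omega_\epsilon}\leq D$ is inherited. Thus every hypothesis of Propositions \ref{Gradient estimate} and \ref{Second order estimate} is in place.

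Applying Proposition \ref{Gradient estimate} with $(M,\omega)=(M,\omega_\epsilon)$ gives a constant $C_1$ such that $|\partial\tilde\vp_\epsilon|_{\omega_\epsilon}^2\leq C_1$, and Proposition \ref{Second order estimate} then yields $|\ddbar\tilde\vp_\epsilon|_{\omega_\epsilon}\leq D/2$, provided $A\leq A_D=1/(C_0M_0D)$ with $C_0$ the larger of the two constants supplied by those propositions. Since $\tilde\vp_\epsilon-\vp_\epsilon=4\ln\epsilon$ is a constant, these bounds pass immediately to $\vp_\epsilon$, giving the claimed conclusion with $M_1:=C_1$.

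The main technical point that needs to be checked is $\epsilon$-uniformity of the constants $C_0$ and $C_1$ produced by Propositions \ref{Gradient estimate} and \ref{Second order estimate} when they are applied with $\omega=\omega_\epsilon$. Inspection of the proofs shows these constants depend only on $\|\tilde\rho_\epsilon\|_{C^2(\omega_\epsilon)}$, $\|\tilde\mu_\epsilon\|_{C^2(\omega_\epsilon)}$, $|\tilde\alpha|$ (all bounded by $K$ or $2K$), and on pointwise geometric quantities of $(M,\omega_\epsilon)$ such as connection coefficients and commutators of the local unitary frame. Thanks to the product structure $\omega_\epsilon=\epsilon^2\omega_1\oplus\omega_2$, one can take a local unitary frame built from a unitary frame of $(M_1,\omega_1)$ rescaled by $\epsilon^{-1}$ together with a unitary frame of $(M_2,\omega_2)$; the Christoffel-type data and frame commutators then reduce to those of $\omega_1$ and $\omega_2$ separately and are therefore $\epsilon$-independent. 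Verifying this reduction is the crux of the argument; once it is done, the rest is a direct application of the preceding two propositions.
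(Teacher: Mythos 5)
Your approach is exactly the paper's: shift $\vp_\epsilon$ by a constant multiple of $\ln\epsilon$ to land back in the setting of Propositions~\ref{Gradient estimate} and \ref{Second order estimate}, with $(M,\omega)$ replaced by $(M,\omega_\epsilon)$. The paper's own proof is the one-line statement that one substitutes $\omega_\epsilon,\tilde\vp_\epsilon,\tilde\rho_\epsilon,\tilde\alpha,\tilde\mu_\epsilon$ into those proofs, where the tilde quantities are the ones introduced in the proof of Proposition~\ref{C0 estimate}, namely $\tilde\vp_\epsilon=\vp_\epsilon+t\ln\epsilon$, $\tilde\alpha=\epsilon^t\alpha$, $\tilde\rho_\epsilon=\epsilon^t\rho_\epsilon$, $\tilde\mu_\epsilon=\epsilon^t\mu_\epsilon$.

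Two remarks. First, you rescale by $\epsilon^4$ rather than $\epsilon^t$ (matching the $\epsilon^4$ that appears in the statement of Proposition~\ref{first and second order}, which is evidently a misprint for $\epsilon^t$). With $\epsilon^4$ your claim that \eqref{uniform function} gives uniform $C^2(\omega_\epsilon)$-bounds on $\tilde\rho_\epsilon=\epsilon^4\rho_\epsilon$ and $\tilde\mu_\epsilon=\epsilon^4\mu_\epsilon$ fails for $t>4$, since \eqref{uniform function} only controls $\epsilon^t\rho_\epsilon$ and $\epsilon^t\mu_\epsilon$; the scaling must be by $\epsilon^t$, and then the hypothesis reads $\frac{A\epsilon^t}{M_0}\leq e^{-\vp_\epsilon}\leq M_0A\epsilon^t$, consistent with the $C^0$ estimate in Proposition~\ref{C0 estimate}. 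Second, the $\epsilon$-uniformity concern you raise is a real one that the paper does not discuss, but the explanation you sketch is not quite right: if $\{e^1_i\}$ is $\omega_1$-unitary, then the $\omega_\epsilon$-unitary frame is $\{\epsilon^{-1}e^1_i\}$, and $[\epsilon^{-1}e^1_i,\epsilon^{-1}e^1_j]=\epsilon^{-2}[e^1_i,e^1_j]$, so the commutators and curvature of $\omega_\epsilon$ on the $M_1$-factor are \emph{not} $\epsilon$-independent; they scale like $\epsilon^{-1}$ or $\epsilon^{-2}$. The actual reason the constants stay uniform is that, after the $\epsilon^t$-rescaling, one also has $\tilde\alpha\to 0$; the curvature-dependent terms in the proofs of \cite{CHZ19} enter multiplied by $\tilde\alpha$ (or by $\tilde\alpha$-weighted quantities such as $\tilde\alpha e^{-\tilde\vp_\epsilon}\tilde\rho_\epsilon$), so the potentially blowing-up geometric data is compensated. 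This is the compensation the paper relies on implicitly, and it is worth making explicit.

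Modulo these two corrections, your reduction to Propositions~\ref{Gradient estimate} and \ref{Second order estimate} is the intended argument and is sound.
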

\begin{proof}

Replacing $\omega$, $\vp$,  $\rho$, $\alpha$ and	$\mu$ by $\omega_{\epsilon}, $ $\tilde{\vp}_{\epsilon}$, $\tilde{\rho}_{\epsilon}$, $\tilde{\alpha}$ and $\tilde{\mu}_{\epsilon}$ in the proof of  Proposition \ref{Gradient estimate} and  Proposition \ref{Second order estimate}, we obtain it.
\end{proof}

\subsection{Proof of Theorem \ref{Estimate}}Now, we complete the proof of Theorem \ref{Estimate}.

\begin{proof}
We consider the family equation:

\begin{equation}\label{Fu-Yau equation c}
\begin{split}
\ddbar(e^{\tilde{\vp}_{\epsilon}}\omega_{\epsilon}-s\tilde{\alpha} e^{-\tilde{\vp}_{\epsilon}}\tilde{\rho}_{\epsilon}\omega_{\epsilon}) \wedge\omega_{\epsilon}^{n-2}
&+sn\tilde{\alpha}(\ddbar\tilde{\vp}_{\epsilon})^{2}\wedge\omega_{\epsilon}^{n-2}\\
&+s\tilde{\mu}_{\epsilon}\omega_{\epsilon}^{n}=0.
\end{split}
\end{equation}

 We define the following sets of functions on $M$,
\begin{equation*}
\begin{split}
B & = \{ u\in C^{2,\beta}(M) ~|~ \int_{M}e^{-u}\omega_{\epsilon}^{n}=A\epsilon^{2n_{1}} \},\\[2mm]
B_{1} & = \{ (u,s)\in B\times[0,1] ~|~ \text{$u$ ~satisfies~}  e^{u}\omega_{\epsilon}+\tilde{\alpha} e^{-u}\ti{\rho}_{\epsilon}+2n\tilde{\alpha}\ddbar u \in \Gamma_{2}(M) \}.\\
\end{split}
\end{equation*}
Note that at $s=0$, $\tilde{\vp}_{\epsilon}=\ln A $.
Let $I$ be the set
\begin{equation*}
\{ s\in [0,1] ~|~ \text{there exists $(\tilde{\vp},s)\in B_{1}$ satisfying \eqref{Fu-Yau equation c}} \}.
\end{equation*}

By the \cite[Subsection 5.1]{CHZ19}, $I$ is open. 
Now we prove $I$ is closed. The proof is similar to \cite[Proposition 5.1]{CHZ19}. For reader's convenience, we include the proof.	Let $\tilde{\vp}_{s\epsilon}$ be the solution of (\ref{Fu-Yau equation c}) at time $s$. Without confusion, we use $\tilde{\vp}_{s}$ to denote $\tilde{\vp}_{s\epsilon}$. We have
  
\begin{proposition}\label{A priori estimate}
	 If $\vp_{s}$ satisfies (\ref{Elliptic condition}) and (\ref{Normalization condition}), there exists a constant $C_{A}$ depending only on $A$, $K$, $\beta$ and $(M,\omega_{\epsilon})$ such that
	\begin{equation*}
	\begin{split}
	&\sup_{M}e^{-\tilde{\vp}_{s}} \leq 2M_{0}A, \,   \sup_{M}|\de\dbar\tilde{\vp}_{s}|_{\epsilon} \leq D_{0}, \\
	&	\sup_{M}|\de\tilde{\vp}_{s}|_{\epsilon}^{2} \leq C,\ \ 	\|\tilde{\vp}_{s}\|_{C^{2,\beta}}\leq C_{A}, ~s\in[0,s_{0}).
	\end{split}
	\end{equation*}
Here $C, M_{0}, D_{0}$ are uniform constant only depending on $(M,\omega)$ and $K$.	
\end{proposition}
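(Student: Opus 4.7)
The plan is to run a double bootstrap along the continuity parameter $s$, invoking Propositions \ref{C0 estimate} and \ref{first and second order} whose hypotheses must be verified as we go. The key structural observation is that equation (\ref{Fu-Yau equation c}) is equation (\ref{Fu-Yau equation 2}) with data $\tilde{\alpha},\tilde{\rho}_{\epsilon},\tilde{\mu}_{\epsilon}$ rescaled by $s\in[0,1]$; since rescaling by $s\le 1$ only shrinks the bounds in (\ref{modified function}), both propositions apply verbatim at each $s$ with constants independent of $s$. At $s=0$ the equation collapses to $\ddbar(e^{\tilde{\vp}}\omega_{\epsilon})\wedge\omega_{\epsilon}^{n-2}=0$, whose constant solution $\tilde{\vp}_{0}=\ln A$ is pinned by the normalization and trivially satisfies every target bound with slack.

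For the zero-order bootstrap, let $J_{1}\subset[0,s_{0})$ be the subset on which $e^{-\tilde{\vp}_{s}}\le\delta_{0}$: this set is closed in $s$ and contains $0$ provided $A\le A_{0}$ is small. For $s\in J_{1}$, Proposition \ref{C0 estimate} applies and returns the substantially stronger bound $\sup_{M}e^{-\tilde{\vp}_{s}}\le M_{0}A$. Choosing $A_{0}$ so that $M_{0}A_{0}<\delta_{0}/2$ makes $J_{1}$ open as well, hence $J_{1}=[0,s_{0})$, and we obtain $A/M_{0}\le e^{-\tilde{\vp}_{s}}\le M_{0}A$ throughout. For the second bootstrap, set $J_{2}=\{s\in[0,s_{0}):|\de\dbar\tilde{\vp}_{s}|_{\epsilon}\le D_{0}\}$; again $0\in J_{2}$ (trivially) and $J_{2}$ is closed. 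On $J_{2}$, the hypotheses of Proposition \ref{first and second order} hold with $D=D_{0}$ (using Step~1 together with $A\le A_{D_{0}}$), yielding the strict improvement $|\de\dbar\tilde{\vp}_{s}|_{\epsilon}\le D_{0}/2$ and $|\de\tilde{\vp}_{s}|_{\epsilon}^{2}\le M_{1}$. Thus $J_{2}$ is open, so $J_{2}=[0,s_{0})$, which gives the uniform $C^{1}$ and $C^{2}$ bounds.

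With uniform $C^{0}, C^{1}, C^{2}$ control in hand and the $\Gamma_{2}$-ellipticity (\ref{Elliptic condition}), the equation is a uniformly elliptic $\sigma_{2}$-type fully nonlinear Hessian equation whose linearization has eigenvalues pinched between constants depending only on $A$. Evans-Krylov theory then provides a $C^{2,\beta_{0}}$ bound for some $\beta_{0}>0$, and standard Schauder theory upgrades this to the claimed $C^{2,\beta}$ with constant $C_{A}$ depending on $A, K, \beta$, and $(M,\omega_{\epsilon})$. The main technical hurdle is orchestrating the two bootstraps simultaneously: $A_{0}$ must be chosen so small that both $M_{0}A_{0}<\delta_{0}/2$ (to close Step~1) and $A_{0}\le A_{D_{0}}$ (to close Step~2) hold. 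Because all the constants $M_{0}, D_{0}, C_{0}, A_{D_{0}}$ ultimately depend only on $K$ and $(M,\omega)$, this ordering is harmless. The $A$-dependence of $C_{A}$ enters unavoidably at the final step through the ellipticity constants.
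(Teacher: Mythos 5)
Your proof is correct and follows essentially the same route as the paper: a continuity/bootstrap argument in $s$ closes the $C^0$ bound via Proposition \ref{C0 estimate} and the $C^2$ bound via Proposition \ref{first and second order}, the gradient bound then comes for free, and the $C^{2,\beta}$ estimate follows from the uniformly elliptic non-degenerate $\sigma_2$ structure (the paper invokes the $C^{2,\alpha}$ estimate of \cite{TWWY15} where you cite Evans--Krylov plus Schauder, but these are the same mathematical input). The only slip is the sign at $s=0$: the normalization $\int_M e^{-\tilde{\vp}_0}\omega_\epsilon^n=A\epsilon^{2n_1}$ with $\vol_\epsilon(M)=\epsilon^{2n_1}$ forces $\tilde{\vp}_0=-\ln A$, not $\ln A$ (the paper itself has this typo in one place), which is harmless since all that matters is $e^{-\tilde{\vp}_0}=A\le M_0 A$.
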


\begin{proof} First, we  give the zero order estimate.   In fact,  we have
	\begin{claim}\label{claim-2}
		\begin{equation}\label{Closeness equation 1}
		\sup_{M}e^{-\tilde{\vp}_{s}} \leq 2M_{0}A, ~s\in[0,s_{0}),
		\end{equation}
		where $M_{0}$ is the constant in Proposition \ref{C0 estimate}.
	\end{claim}
	Note that  $\tilde{\vp}_{0}=-\ln A$.  Then $\sup_{M}e^{-\tilde{\vp}_{0}} \leq M_{0}A$, which satisfies (\ref{Closeness equation 1}).   If (\ref{Closeness equation 1}) is false, we can find  $\tilde{s}\in (0,s_{0})$ such that
	\begin{equation}\label{Closeness equation 2}
	\sup_{M}e^{-\tilde{\vp}_{\tilde{s}}} = 2M_{0}A.
	\end{equation}
 Recall $\delta_{0}=\sqrt{\frac{1}{2|\tilde{\alpha}|\|\tilde{\rho}\|_{C^{0}(\omega_{\epsilon})}+1}}$ is chosen  as in Proposition \ref{C0 estimate}. Assume $2M_{0}A\le   \delta_{0}$,  then $e^{-\vp_{\ti{s}}}\leq\delta_{0}$. Applying  Proposition \ref{C0 estimate} to $\tilde{\vp}_{\ti{s}}$,  we derive
$e^{-\tilde{\vp}_{\tilde{s}}} \leq M_{0}A,$
	which contradicts to (\ref{Closeness equation 2}). Then we obtain (\ref{Closeness equation 1}). Combining (\ref{Closeness equation 1}) and Proposition \ref{C0 estimate},  the zero order estimate follows.

Using the similar argument, we obtain the second order estimate
	\begin{equation}\label{Closeness equation 4}
	\sup_{M}|\de\dbar\tilde{\vp}_{s}|_{g} \leq D_{0},
	\end{equation}
	for any $s\in(0,s_{0})$, where $D_{0}$ is the constant as in Proposition \ref{Second order estimate}.
	
	By (\ref{Closeness equation 4}) and Proposition \ref{first and second order}, we have  the first order estimate
	\begin{equation}\label{Closeness equation 3}
	\sup_{M}|\de\tilde{\vp}_{s}|_{g}^{2} \leq C.
	\end{equation}
	On the other hand,  \eqref{Fu-Yau equation 2} can be written as a $2$-nd Hessian equation of the form (see \cite[1.8]{CHZ19})
	\begin{align}\label{fu-2}
	\sigma_{2}(\ti{\omega})=F(z,\tilde{\varphi},  \de\tilde{\vp}),
	\end{align}
	where

	$$F(z,\tilde{\varphi},  \de\tilde{\vp})= \frac{n(n-1)}{2}\left(e^{2\tilde{\vp}}-4\alpha e^{\tilde{\varphi}}|\de\tilde{\vp}|_{\epsilon}^{2}\right)+\frac{n(n-1)}{2}f(z,\tilde{\vp},\de\tilde{\vp}).$$
In fact,	 $f(z,\tilde{\vp},\de\tilde{\vp})$  satisfies
	$$|f(z,\tilde{\vp},\de\tilde{\vp})|\le C(e^{-2\tilde{\varphi}}+ e^{-\tilde{\varphi}}|\partial\tilde{\varphi}|^2_{\epsilon}+1).$$
	
By (\ref{Closeness equation 4}), (\ref{Closeness equation 3}) and (\ref{Fu-Yau equation c}),  we get
	\begin{equation}\label{non-degenrate}
	\left|\sigma_{2}(\ti{\omega})-\frac{n(n-1)}{2}e^{2\tilde{\vp}}\right| \leq Ce^{\tilde{\vp}}.
	\end{equation}
Using the zero order estimate, we deduce $\frac{1}{CA^{2}} \leq \sigma_{2}(\ti{\omega}) \leq \frac{C}{A^{2}}.$
	Hence,   (\ref{Fu-Yau equation c})  is uniformly elliptic and non-degenerate.  
	
	By  the $C^{2,\alpha}$-estimate (cf. \cite[Theorem 1.1]{TWWY15}), we obtain
$	\|\vp_{s}\|_{C^{2,\beta}}\leq C_{A}.$
	\end{proof}

Then $I=[0,1]$. By Proposition \ref{A priori estimate}, the Theorem \ref{Estimate} follows.

\end{proof}

\section{Proof of Theorem \ref{convergence}}
In this section, we prove the Theorem \ref{convergence}. First, we have the following proposition. It plays an important role in the proof of Theorem \ref{convergence}.
\begin{proposition}\label{near the average}
	Let $\underline{\tilde{\vp}}_{\epsilon}=\frac{1}{\vol(M_{1})}\int_{M_{1}}\tilde{\vp}_{\epsilon}\omega_{1}^{n_{1}}$.
	Then
	\begin{equation}\label{average}
	|\tilde{\vp}_{\epsilon}-\underline{\tilde{\vp}}_{\epsilon}|\leq C\epsilon^{2},
	\end{equation}
	where $C$ is a uniform constant only depending on $(M,\omega)$ and $K$.
\end{proposition}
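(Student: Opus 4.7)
The plan is to exploit the anisotropic product structure $\omega_\epsilon = \epsilon^2\omega_1 \oplus \omega_2$: the uniform $C^2$ bound in Theorem \ref{Estimate} will force $\ddbar\tilde{\vp}_\epsilon$, restricted to any $M_1$-slice, to be $O(\epsilon^2)$ with respect to $\omega_1$, after which a Monge-Amp\`ere $L^\infty$ estimate on $(M_1, \omega_1)$ converts this into the pointwise bound on $\tilde{\vp}_\epsilon - \underline{\tilde{\vp}}_\epsilon$.

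First I would fix $y \in M_2$ and set $u_\epsilon^y(x) := \tilde{\vp}_\epsilon(x,y) - \underline{\tilde{\vp}}_\epsilon(y)$, a function on $M_1$ with $\int_{M_1} u_\epsilon^y \, \omega_1^{n_1} = 0$ by construction. In any coordinate chart adapted to the product decomposition, $\omega_\epsilon$ restricts to $\epsilon^2 \omega_1$ on $TM_1$, while the $M_1$-block of $\ddbar\tilde{\vp}_\epsilon$ coincides with $\ddbar_{M_1} u_\epsilon^y$, since $\underline{\tilde{\vp}}_\epsilon(y)$ is independent of $x$. Because each eigenvalue of a principal submatrix of a Hermitian matrix is bounded in modulus by the Frobenius norm of the full matrix, the bound $|\ddbar\tilde{\vp}_\epsilon|_{\omega_\epsilon} = |\ddbar\vp_\epsilon|_{\omega_\epsilon} \leq M_0$ from Theorem \ref{Estimate} forces the eigenvalues of $\ddbar_{M_1} u_\epsilon^y$ with respect to $\omega_1$ to lie in $[-M_0\epsilon^2, M_0\epsilon^2]$.

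Next I would rescale by setting $v := (2n_1 M_0 \epsilon^2)^{-1} u_\epsilon^y$; by the previous step $\omega_1 + \ddbar v \geq 0$ on $M_1$, so $v \in \PSH(M_1, \omega_1)$ with $\int_{M_1} v \,\omega_1^{n_1} = 0$. Expanding $(\omega_1 + \ddbar v)^{n_1}$ term by term and using $|\ddbar v|_{\omega_1} \leq 1$, the resulting Monge-Amp\`ere volume form is dominated in $L^\infty$ by a universal constant multiple of $\omega_1^{n_1}$ depending only on $n_1$. The $L^\infty$ estimate for the complex Monge-Amp\`ere equation on the compact K\"ahler manifold $(M_1, \omega_1)$, as in Kolodziej-Nguyen \cite{KN15}, then gives $\|v\|_{L^\infty(M_1)} \leq C$ with $C$ depending only on $(M_1, \omega_1)$ and $M_0$. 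Undoing the rescaling yields $|u_\epsilon^y(x)| \leq C' \epsilon^2$ on $M_1$; since every constant above is independent of $y \in M_2$, the claimed estimate $|\tilde{\vp}_\epsilon - \underline{\tilde{\vp}}_\epsilon| \leq C\epsilon^2$ will follow on all of $M$.

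The main obstacle is the eigenvalue/Frobenius-norm unpacking in the first step: one must verify that the $\omega_\epsilon$-norm bound on the full Hessian genuinely controls each eigenvalue of its $M_1$-block, and that the anisotropic rescaling transfers this bound to $\omega_1$ with a gain of $\epsilon^2$. Once this block-matrix computation is secured, the passage through the $\omega_1$-psh rescaling to the Kolodziej-Nguyen $L^\infty$ estimate is essentially off-the-shelf, and the final uniformity in $y$ is automatic.
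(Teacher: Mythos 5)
Your proposal is correct and follows essentially the same route as the paper: rescale the slicewise deviation by $\epsilon^{-2}$, use the uniform bound on $\ddbar\tilde{\vp}_{\epsilon}$ from Theorem \ref{Estimate} (together with the anisotropy of $\omega_{\epsilon}$ on the $M_{1}$-directions) to see that the rescaled function is quasi-plurisubharmonic on $(M_{1},\omega_{1})$ with uniformly bounded Monge-Amp\`ere measure, and then invoke the Kolodziej-Nguyen $L^{\infty}$ estimate. The only cosmetic difference is that the paper works with the positive form $e^{\tilde{\vp}}\omega_{1}+\ddbar\tilde{\psi}$ coming from the ellipticity condition for small $A$, whereas you normalize by $2n_{1}M_{0}$ to land directly in $\PSH(M_{1},\omega_{1})$; both are equivalent in substance.
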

\begin{proof}
	Let $\tilde{\psi}=\frac{1}{\epsilon^{2}}(\tilde{\vp}_{\epsilon}-\underline{\tilde{\vp}}_{\epsilon})$. By Theorem \ref{Estimate}, we can choose $A$ small enough such that  $e^{\tilde{\vp}}\omega_{\epsilon}+\ddbar\tilde{\vp}_{\epsilon}>0.$
	Then, by Theorem \ref{Estimate}, we have
	\begin{equation*}
	(e^{\tilde{\vp}}\omega_{1}+\ddbar\tilde{\psi})^{n}\leq C.
	\end{equation*}
	Using the $L^{\infty}$ estimate for complex Monge-Amp\`ere equation \cite{KN15}, we have $\|\tilde{\psi}\|_{L^{\infty}}\leq C,$ which implies
	$|\tilde{\vp}_{\epsilon}-\underline{\tilde{\vp}}_{\epsilon}|\leq C\epsilon^{2}.$
\end{proof}
Now we prove the Theorem \ref{convergence}. 
\begin{proof}[Proof of Theorem \ref{convergence}]
		By Theorem \ref{Estimate}, there exists a subsequence, still denoted by $\ddbar\tilde{\vp_{\epsilon}}$, and a function $\hat{\vp}$  such that $\tilde{\vp}_{\epsilon}$ converge to $\hat{\vp}$ in $C^{1,\beta}(M,\omega)$ and $\ddbar\tilde{\vp}_{\epsilon}$ weakly converge to $\ddbar\hat{\vp}$. By Proposition \ref{near the average},  $\hat{\vp}$ is a function on $M_{2}.$
		Note that $\tilde{\mu}_{\epsilon}$ are uniformly bounded. There exists a subsequence, still denoted by $\tilde{\mu}_{\epsilon}$, weakly converges to a function $\mu_{0}$ in  $L^{2}(M,\omega)$.
	
	Let $\eta$ be a smooth function on $M_{2}$. Then, by \eqref{Fu-Yau equation 2}, we have
	\begin{equation}\label{act on test function}
	\begin{split}
	\frac{1}{\epsilon^{2n_{1}}}&\int_{M}\eta\ddbar{e^{\tilde{\vp}_{\epsilon}}}\wedge\omega_{\epsilon}^{n-1}-	\frac{1}{\epsilon^{2n_{1}-2t}}\alpha\int_{M}\eta \ddbar(e^{-2\tilde{\vp}_{\epsilon}}\rho_{\epsilon})\wedge\omega^{n-2}_{\epsilon}\\
	&+\frac{1}{\epsilon^{2n_{1}-t}}\int_{M}n\alpha\eta(\ddbar{\tilde{\vp_{\epsilon}}})^{2}\wedge\omega^{n-2}_{\epsilon}+\frac{1}{\epsilon^{2n_{1}-t}}\int_{M}\mu_{\epsilon}\eta\omega^{2n}_{\epsilon}=0.
	\end{split}
	\end{equation}
First, we estimate the first term. Note
\begin{equation}
\begin{split}
 &\frac{1}{\epsilon^{2n_{1}}}\int_{M}\eta\ddbar{e^{\tilde{\vp}_{\epsilon}}}\wedge\omega_{\epsilon}^{n-1}\\
=&\frac{1}{\epsilon^{2n_{1}}}\int_{M}\eta\ddbar(e^{\tilde{\vp}_{\epsilon}}-e^{\underline{\tilde{\vp}}_{\epsilon}})\wedge\omega_{\epsilon}^{n-1}+\frac{1}{\epsilon^{2n_{1}}}\int_{M}\eta\ddbar{e^{\underline{\tilde{\vp}}_{\epsilon}}}\wedge\omega_{\epsilon}^{n-1}\\
=&\frac{1}{\epsilon^{2n_{1}}}\int_{M}(e^{\tilde{\vp}_{\epsilon}}-e^{\underline{\tilde{\vp}}_{\epsilon}})\ddbar\eta\wedge\omega_{\epsilon}^{n-1}+\frac{1}{\epsilon^{2n_{1}}}\int_{M}e^{\underline{\tilde{\vp}}_{\epsilon}}\ddbar\eta\wedge\omega_{\epsilon}^{n-1}\\
=&\int_{M}(e^{\tilde{\vp}_{\epsilon}}-e^{\underline{\tilde{\vp}}_{\epsilon}})\ddbar\eta\wedge\omega_{1}^{n_{1}}\wedge\omega_{2}^{n_{2}-1}
  +\int_{M}e^{\underline{\tilde{\vp}}_{\epsilon}}\ddbar\eta\wedge\omega_{1}^{n_{1}}\wedge\omega_{2}^{n_{2}-1}\\
:=&A_{1}+A_{2}.
\end{split}
\end{equation}

For the first term on the right hand side, using Theorem \ref{Estimate} and \eqref{average}, we have
\[A_{1}\rightarrow0.\]
Note $\tilde{\vp}_{\epsilon}$  converge to $\hat{\vp}$. 
Combining with \eqref{average}, we have
\[\underline{\tilde{\vp}}_{\epsilon}\rightharpoonup\hat{\vp}.\]
Hence, we have
\[A_{2}\rightarrow\int_{M}e^{\hat{\vp}}\ddbar \eta \wedge \omega_{1}^{n_{1}}\wedge\omega_{2}^{n_{2}-1}.\]
Therefore,
\begin{equation}\label{test term 1}
 \frac{1}{\epsilon^{2n_{1}}}\int_{M}\eta\ddbar{e^{\tilde{\vp}_{\epsilon}}}\wedge\omega_{\epsilon}^{n-1}\rightarrow \int_{M}e^{\hat{\vp}}\ddbar \eta \wedge \omega_{1}^{n_{1}}\wedge\omega_{2}^{n_{2}-1}.
\end{equation}
Denote by $\Omega^{1,,1}(M_{1}), \Omega^{1,,1}(M_{2})$  the set of smooth sections of  $(1,1)$ form on $M_{1}$, $M_{2}$, respectively.  Let $\rho_{1\epsilon}$, $\rho_{2\epsilon}$ be the projection  of $\rho_{\epsilon}$ into $\Omega^{1,1}(M_{1})$, $\Omega^{1,1}(M_{2})$, respectively.
By \eqref{uniform function}, we have
\begin{equation}\label{second term convergence}
\begin{split}
 &\frac{1}{\epsilon^{2n_{1}-2t}}\alpha\int_{M}\eta \ddbar(e^{-\tilde{\vp}_{\epsilon}}\rho_{\epsilon})\wedge \omega^{n-2}_{\epsilon}\\
=&\frac{1}{\epsilon^{2n_{1}-2t}}\alpha\int_{M}e^{-\tilde{\vp}_{\epsilon}}\rho_{\epsilon}\wedge \ddbar\eta\wedge\omega^{n-2}_{\epsilon}\\
=&\epsilon^{2t-2}\alpha\int_{M}e^{-\tilde{\vp}_{\epsilon}}\rho_{1\epsilon}\wedge \ddbar\eta\wedge \omega_{1}^{n_{1}-1}\wedge\omega_{2}^{n_{2}-1}\\
&+\epsilon^{2t}\alpha\int_{M}e^{-\tilde{\vp}_{\epsilon}}\rho_{2\epsilon}\wedge \ddbar\eta\wedge \omega_{1}^{n_{1}}\wedge\omega_{2}^{n_{2}-2}\rightarrow 0.\\
\end{split}
\end{equation}
 
For the third term in \eqref{act on test function},
by Theorem \ref{Estimate} , we have
\[|\frac{1}{\omega^{n}\epsilon^{2n_{1}-2t}}(\ddbar{\tilde{\vp}_{\epsilon}})^{2}\wedge\omega^{n-2}_{\epsilon}|\leq C\epsilon^{2t},\]
which implies
\begin{equation}\label{test term 2}
\begin{split}
\frac{1}{\epsilon^{2n_{1}-t}}\int_{M}n\alpha\eta(\ddbar{\tilde{\vp}_{\epsilon}})^{2}\wedge\omega^{n-2}_{\epsilon}\rightarrow 0.
\end{split}
\end{equation}

For the last term,  we have
\begin{equation*}
\frac{1}{\epsilon^{2n_{1}}}\int_{M}\tilde{\mu}_{\epsilon}\eta\omega^{2n}_{\epsilon}\rightarrow \int_{M_{2}}\eta (\int_{M_{1}}\mu_{0}\omega_{1}^{n_{1}})\omega_{2}^{n_{2}}.
\end{equation*}

In conclusion, using \eqref{act on test function}, \eqref{test term 1}, \eqref{second term convergence} and \eqref{test term 2}, when $\epsilon\rightarrow 0$, we obtain
 \begin{equation}
 \begin{split}
 \int_{M}e^{\hat{\vp}}\ddbar \eta\wedge\omega_{1}^{n_{1}}\wedge\omega_{2}^{n_{2}-1}+\int_{M_{2}}\eta (\int_{M_{1}}\mu_{0}\omega_{1}^{n_{1}})\omega_{2}^{n_{2}}\\
=\int_{M_{1}}\omega_{1}^{n}\int_{M_{2}}e^{\hat{\vp}}\ddbar \eta\wedge\omega_{2}^{n_{2}-1}++\int_{M_{2}}\eta (\int_{M_{1}}\mu_{0}\omega_{1}^{n_{1}})\omega_{2}^{n_{2}}=0.
 \end{split}
\end{equation}
Recall $\hat{\vp}$ is independent of the points on $M_{1}$.
It implies
\[\int_{M_{2}}e^{\hat{\vp}}\ddbar \eta\wedge\omega_{2}^{n_{2}-1}+\int_{M_{2}}\eta \hat{\mu}\omega_{2}^{n_{2}} =0,\]
where $\hat{\mu}=\frac{1}{\int_{M_{1}}\omega_{1}^{n_{1}}}\int_{M_{1}}\mu_{0}\omega^{n_{1}}_{1}.$
Therefore, $\hat{\vp}$ is a weak solution of
\begin{equation}
\ddbar e^{\hat{\vp}}\wedge \omega_{2}^{n_{2}-1}+\hat{\mu}\omega_{2}^{n_{2}}=0.
\end{equation}
\end{proof}

\end{document}